\newtheorem{Theorem}{Theorem}[section]
\newtheorem{Lemma}[Theorem]{Lemma}
\newtheorem{Proposition}[Theorem]{Proposition}
\newtheorem{Definition}[Theorem]{Definition}
\newtheorem{Corollary}[Theorem]{Corollary}
\newtheorem{Remark}[Theorem]{Remark}
\newcommand{\dif}{\mathrm{d}}
\newcommand{\di}{\mathrm{div}}
\newcommand{\p}{\partial}
\def \inner#1#2{\langle\,#1, #2\,\rangle}
\def \RR{\mathbb{R}}
\begin{document}

\title{Existence, regularity and uniqueness of weak solutions for  a class of incompressible generalized Navier-Stokes system with slip boundary conditions in $\RR^3_+$ \footnote{This research is partially supported by NSFC(11201411) and  Natural Science Funds of Jiangxi Science and Technology (20122BAB211004) }}
\author{Aibin Zang}

\date{}

\maketitle

\begin{center}
The School of Mathematics and Computer Science, Yichun University, Yichun, Jiangxi, P.R.China, 336000\\

\footnotesize{Email: zangab05@126.com}

\end{center}

\begin{abstract}
We obtain the existence, regularity, uniqueness of  the non-stationary problems of a class of non-Newtonian fluid  is a power law fluid with $p>\frac{9}{5}$ in the half-space under slip boundary conditions.\\
\textbf{Keywords}: Non-Newtonian Fluid; Navier's slip boundary conditions; weak solution;\\
\textbf{Mathematics Subject Classification(2000)}: 76D05, 35D05,54B15, 34A34.
\end{abstract}

\section{Introduction}

Let $\Omega\subset\mathbb{R}^3$ be an open set.  For any $T<\infty$, set $Q_T=\Omega\times (0,T).$ The motion of a homogeneous, incompressible fluid through $\Omega$ is governed  by the following equations 
   \begin{equation}\label{1.1}\tag{1.1}
\left\{\begin{aligned} &\partial_t u-\di S+(V\cdot\nabla)u+\nabla \pi=f, &\mbox{in}~~Q_T \\[3mm]
&\nabla\cdot u = 0, &\mbox{in}~~Q_T \\[2mm]
&u|_{t=0}=u_0(x),& \mbox{in}~~\Omega,
\end{aligned}\right.
\end{equation}
 where $u$ is the velocity,  $\pi$ is pressure and $f$ is the force, $V$ is chosen a solenoid vector function and  tangential to the boundary of $\Omega$, $u_0$ is initial velocity and $S=(s_{ij})_{i,j=1}^n$ is stress tensor. The above system (1.1) has to be completed by boundary conditions except that $\Omega$ is the whole space and by constitutive assumptions for the extra tensor. Concerning the former  we can impose the following Navier slip boundary conditions 
 \begin{equation}\label{1.2} \tag{1.2}
 \begin{aligned}
 u\cdot n=0,  (S\cdot n)_\tau-\alpha u_\tau=0,\mbox{on}~~\p\Omega\times(0,T),
 \end{aligned}
 \end{equation}
 where $\alpha$ is the frictional constant.

Many extra tensors are characterized by Stoke's law $S=\nu D(u)$,  where $D(u)$ is the symmetric velocity  gradient, i.e. $$D_{ij}(u)=\frac{1}{2}\left(\frac{\p u_i}{\p x_j}+\frac{\p u_j}{\p x_i}\right).$$ Assume that $\nu$ is a constant and $V=u$, (1.1) is called incompressible Navier-Stokes equations.

However, there are phenomena that can  be described by  $\nu=\nu(|D(u)|)$ with power-law ansatz to model certain non-Newtonian behavior of the fluid flows, and they are frequently used engineering literature. We can refer the book by Bird, Armstrong and Hassager \cite{BAH} and the survey paper due to M\'{a}lek and Rajagopal \cite{MAR}.  Typical  examples for this constitutive relations are 
\begin{equation}\label{1.3}\tag{1.3}
\begin{aligned}
S(D(u))=\mu(\delta +|D(u)|)^{q-2}D(u)\\
S(D(u))=\mu(\delta +|D(u)|^2)^{\frac{q-2}{2}}D(u),
\end{aligned}
\end{equation}
with $1<q<\infty, \delta\ge 0,$ and $\mu>0$.

The mathematical analysis of these models  started with the work of Lady\v{z}henska \cite{LA01},\cite{LA02},\cite{LA03}. She investigated the well-posedness of the initial boundary value problem with non-slip boundary conditions, associated with the stress tensor (1.3).  In 1969, J.L. Lions \cite{Lions} proved some existence results for $p-$Laplacian equation with $p\ge 1+\frac{2n}{n+2}$ and the uniqueness for $p\ge\frac{n+2}{n}$ under no-slip boundary conditions.  In those papers, the authors applied the properties of  monotone operator and Minty trick theory for the stress tensor satisfies the strict monotonicity and coercivity.

Over these years, Lady\v{z}henska's and Lions' work were improved   in several directions by different authors.  In particular, for the steady problem, there are several results proving existence of weak solution in bounded domain \cite{DMM,fr,FR01}, interior regularity \cite{AMG,NW} and very recently regularity up to boundary for the Dirichlet problem \cite{b1,BE01,BE02,BE03,BE04,BE05,BE06,CR01, CR02,SH}. Concerning  the time-evolution Dirichlet problem in a 3D domain, J. M\'{a}lek, J. Ne\u cas, and M. R\r u\u zi\u cka\cite{MNR} study the weak solution for $p\ge 2$. Later, L. Diening et.al have  recent advances on the existence of weak solutions in \cite{DMM} for $p>\frac{8}{5}$ and in \cite{DRW} for $p>\frac{6}{5}.$ There are also many papers dealing with regularity of for evolution Dirichlet boundary problems and we refer instance to \cite{AMANN,BA,BP,BEKP,BE03,BE04,BE05,BE06}. In the three-dimensional cube with space periodic boundary conditions, there are a lot of literatures for the well-posedness of this model, we refer to the monograph \cite {ma} and papers\cite{BDR,dien}.

It should be emphasized that theoretical contributions mostly concern the homogenous boundary condition and space periodic boundary conditions. However, many other boundary conditions are important for engineer experiment and computation science. Commonly used boundary conditions are Navier-type boundary conditions, which were introduced by Navier in \cite{NA}.  Newtonian fluid under Navier slip boundary conditions was studied by many mathematician,\cite{B2, B3} and \cite{xin}. However,  there are not too many results for non-Newtonian fluid.  In \cite{b1, EM}, the authors investigated the regularity of steady flows with  shear-dependent viscosity on the slip boundary conditions. M. Bul\'{\i}\v{c}ek, J.M\'{a}lek and K.R. Rajagopal \cite{bu} obtained the weak solution for the evolutionary generalized Navier-stokes-like system of pressure and shear-dependent viscosity on the Navier-type slip boundary conditions in the bounded domain.

   In this paper, we consider the problem  (1.1) with stress tensor $S$ induced by $p-$potential as in Definition 2.1, when $\Omega=\mathbb{R}^3_+$,  under  the following slip boundary conditions
 \begin{equation}\label{1.4}\tag{1.4}
 u\cdot n|_{x_3=0}=0, \,\,~((S(D(u))\cdot n)-(n\cdot S(D(u))\cdot n)n)|_{x_3=0}=0.
 \end{equation}
In fact, this problem corresponds to the free boundary problem for the non-Newtonian fluids with free surface supposed invariable.

 Since  we choose the stress tensor induced by a $p-$potential, and then we will obtain  the equivalent conditions:
$$u_3|_{x_3=0}=\left.\frac{\p u_i}{\p x_3}\right|_{x_3=0}=0\,\,(i=1,2).\eqno{(1.5)}.$$ From these conditions, we extend to the external force term $f$ and initial velocity $u_0$ to whole space by mirror reflection method and change (1.1) into a Cauchy problem. Hence, we can focus on  the regularity estimates, uniqueness and existence  of this Cauchy problem.  Then one can obtain the existence of the solution by Galerkin Method in the half space.

The paper is organized as follows. In section 2, after recalling the notation and presenting some preliminary  results, we give the definitions of the $p-$potential and weak solutions.  We also present the existence of the divergence-free base with boundary conditions (1.5) in $W^{2,2}$. In section 3, we show some theorems for the existence, regularity and uniqueness of weak solutions for the system (1.1) with boundary conditions (1.5). 

\section{Preliminaries}
In this section, we will give some assumptions, function spaces and definitions for weak solution. We will show the Korn-type inequalities for unbounded domain and construction of the basis with boundary conditions (1.6).
Let $M^{n\times n}$ be the vector space of all symmetric $n\times n$ matrices $\xi=(\xi_{ij})$. We equip $M^{n\times n}$ with scalar product $\xi:\eta=\sum_{i,j=1}^n \xi_{ij}\eta_{ij}$ and norm $|\xi|=(\xi:\eta)^{\frac{1}{2}}.$
\begin{Definition}
Let $p>1$ and let $F: \mathbb{R}_+\bigcup\{0\}\to\mathbb{R}_+\bigcup\{0\}$ be a convex function, which is $C^2$ on the $\mathbb{R}_+\bigcup\{0\}$, such that $F(0)=0,\,\ F'(0)=0.$ Assume that the induced function $\Phi:M^{n\times n}\to\mathbb{R}_+\bigcup\{0\}$, defined through $\Phi(B)=F(|B|),$ satisfies
\begin{alignat}{12}
&\sum_{jklm}(\p_{jk}\p_{lm}\Phi)(B)C_{jk}C_{lm}\ge\gamma_1(1+|B|^2)^{\frac{p-2}{2}}|C|^2,\tag{2.1}\\
&|(\nabla_{n\times n}^2\Phi)(B)|\le\gamma_2(1+|B|^2)^{\frac{p-2}{2}}\tag{2.2}
\end{alignat}
for all $B,C\in M^{n\times n}$ with constants $\gamma_1,\gamma_2>0.$ Such a function $F$, resp. $\Phi$, is called a $p-$\textbf{potential}.
\end{Definition}

We define the extra stress $S$ induced by $F$, resp. $\Phi,$ by
$$S(B)=\nabla_{n\times n}^2\Phi(B)=F'(|B|)\frac{B}{|B|}$$ for all $B\in M^{n\times n}\setminus\{ \mathbf{0}\}.$
From (2.1), (2.2) and $F'(0)=0$, it easy to know that $S$ can be continuously extended by $S(\mathbf{0})=\mathbf{0}$.

As in the\cite{dien} and\cite{ma}, one can obtain from (2.1) and (2.2) the following properties of $S$.
\begin{Theorem}
There exist constants $c_1,c_2>0$ independent of $\gamma_1,\gamma_2$ such that for all $B,C\in M^{n\times n}$ there holds
\begin{alignat*}{12}
&S(\mathbf{0})=\mathbf{0},\,\ \tag{2.3}\\
&\sum_{i,j}(S_{ij}(B)-S_{ij}(C))(B_{ij}-C_{ij})\ge c_1\gamma_1(1+|B|^2+|C|^2)^{\frac{p-2}{2}}|B-C|^2,\,\, \\
&\sum_{i,j}S_{ij}(B)B_{ij}\ge c_1\gamma_1(1+|B|^2)^{\frac{p-2}{2}}|B|^2,\tag{2.4}\\
&|S(B)-S(C)|\le c_2\gamma_2(1+|B|^2+|C|^ 2)^{\frac{p-2}{2}}|B-C|,\\
&|S(B)|\le c_2\gamma_2(1+|B|^2)^{\frac{p-2}{2}}|B|.\tag{2.5}
\end{alignat*}
\end{Theorem}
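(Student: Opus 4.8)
The plan is to obtain all five assertions from a single representation of the stress $S=\nabla_{n\times n}\Phi$ as the gradient of the potential, together with the pointwise Hessian bounds (2.1)--(2.2). Assertion (2.3) is immediate: since $S(B)=F'(|B|)B/|B|$ with $F'(0)=0$ and $F\in C^2$, one has $F'(r)/r\to F''(0)$ as $r\to 0^+$, so $S$ extends continuously to $B=\mathbf 0$ with value $\mathbf 0$. For the remaining four estimates the key device is the fundamental theorem of calculus along the segment joining $C$ to $B$. Writing $A_t=C+t(B-C)$ and recalling $S_{ij}=\p_{ij}\Phi$, one has
\[
S_{ij}(B)-S_{ij}(C)=\int_0^1\frac{d}{dt}(\p_{ij}\Phi)(A_t)\,\dif t=\int_0^1\sum_{lm}(\p_{ij}\p_{lm}\Phi)(A_t)(B-C)_{lm}\,\dif t .
\]

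First I would derive the strict monotonicity. Contracting the identity with $(B-C)_{ij}$ and summing over $i,j$ gives
\[
\sum_{ij}(S_{ij}(B)-S_{ij}(C))(B-C)_{ij}=\int_0^1\sum_{ijlm}(\p_{ij}\p_{lm}\Phi)(A_t)(B-C)_{ij}(B-C)_{lm}\,\dif t,
\]
and applying (2.1) with the pair $(A_t,\,B-C)$ in place of $(B,C)$ bounds the right-hand side below by $\gamma_1|B-C|^2\,I(B,C)$, where
\[
I(B,C):=\int_0^1\bigl(1+|A_t|^2\bigr)^{\frac{p-2}{2}}\,\dif t .
\]
Similarly, taking matrix norms in the representation and using (2.2) together with $|(\nabla^2_{n\times n}\Phi)(A_t):(B-C)|\le|(\nabla^2_{n\times n}\Phi)(A_t)|\,|B-C|$ yields $|S(B)-S(C)|\le\gamma_2|B-C|\,I(B,C)$. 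Thus both the monotonicity and the Lipschitz estimates reduce to two-sided control of the single scalar integral $I(B,C)$.

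The heart of the matter, and the step I expect to be the main obstacle, is the elementary but delicate comparison lemma
\[
c_1\,\bigl(1+|B|^2+|C|^2\bigr)^{\frac{p-2}{2}}\le I(B,C)\le c_2\,\bigl(1+|B|^2+|C|^2\bigr)^{\frac{p-2}{2}},
\]
with $c_1,c_2$ depending only on $p$. To prove it I would use $1+x^2\sim(1+x)^2$ to replace $I(B,C)$, up to $p$-dependent constants, by $\int_0^1(1+|A_t|)^{p-2}\,\dif t$, and exploit that $|A_t|\le(1-t)|C|+t|B|\le\max(|C|,|B|)=:M$, with equality attained at an endpoint. When $p\ge 2$ the upper bound $1+|A_t|^2\le 1+|B|^2+|C|^2$ is immediate, while the lower bound is obtained by restricting the integral to a subinterval near the endpoint at which $|A_t|=M$, where $|A_t|\ge M/2$. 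When $1<p<2$ the roles reverse: the lower bound is immediate from $|A_t|\le M$ and the negativity of the exponent, whereas the upper bound is delicate if the segment passes close to the origin; here the hypothesis $p>1$, i.e. $(p-2)/2>-\tfrac12$, is exactly what makes the auxiliary integral $\int_0^R(1+u^2)^{\frac{p-2}{2}}\,\dif u$ grow like $R^{\,p-1}$ and thereby match the decay of the right-hand side, the constant degenerating like $1/(p-1)$ as $p\downarrow 1$. A case distinction between $p\ge 2$ and $1<p<2$ is unavoidable; this estimate is classical and is precisely the lemma invoked from \cite{dien} and \cite{ma}.

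Finally I would assemble the pieces. Inserting the lower bound of the lemma into the monotonicity computation gives the monotonicity line of Theorem 2.2 with the constant $c_1$, and inserting the upper bound into the Lipschitz computation gives the Lipschitz line with $c_2$. The coercivity (2.4) and the growth estimate (2.5) then follow at once by specialising $C=\mathbf 0$ and using $S(\mathbf 0)=\mathbf 0$: the monotonicity inequality becomes $S(B):B\ge c_1\gamma_1(1+|B|^2)^{\frac{p-2}{2}}|B|^2$ and the Lipschitz inequality becomes $|S(B)|\le c_2\gamma_2(1+|B|^2)^{\frac{p-2}{2}}|B|$. Since the comparison lemma is purely scalar and its constants depend only on $p$, the resulting $c_1,c_2$ are independent of $\gamma_1,\gamma_2$, as claimed. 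As an independent check of (2.4)--(2.5) one may instead use the radial/tangential eigenstructure of the Hessian of $\Phi(B)=F(|B|)$, for which the tangential direction in (2.1)--(2.2) reads $F'(r)/r\ge\gamma_1(1+r^2)^{\frac{p-2}{2}}$ and $F'(r)/r\le\gamma_2(1+r^2)^{\frac{p-2}{2}}$, yielding the two estimates directly with $c_1=c_2=1$.
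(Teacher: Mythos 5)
Your argument is correct and is essentially the proof the paper implicitly relies on: the paper gives no proof of Theorem 2.2, simply citing \cite{dien} and \cite{ma}, and your reduction via the fundamental theorem of calculus along the segment $A_t=C+t(B-C)$ to the two-sided comparison of $\int_0^1(1+|A_t|^2)^{\frac{p-2}{2}}\,\dif t$ with $(1+|B|^2+|C|^2)^{\frac{p-2}{2}}$ is exactly the standard lemma from those references. The closing observation that (2.4)--(2.5) also follow directly from the tangential part of the Hessian of $\Phi(B)=F(|B|)$ is a nice independent check but not needed.
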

In the following part of this section, we will give some function spaces and the definition of weak solutions for the system (1.1).
\begin{alignat*}{12}
&D(\mathbb{R}^3_+)=\{u\in C^\infty_0(\overline{\mathbb{R}^3_+}): u_3=0\,\, \text{on}\,\, x_3=0\},V_p(\mathbb{R}^3_+)=\overline{\{u\in D(\mathbb{R}^3_+):\nabla\cdot u=0\}}^{\|\nabla\cdot\|_{L^p}},\,\
\\
& H=\overline{D(\mathbb{R}^3_+)}^{\|\cdot\|_{L^2}}.
\end{alignat*}
Denote $\Omega_R=\{x\in\mathbb{R}^3_+: |x|\le R\}$ for $R>0$ , then we have the corresponding spaces for domain $\Omega_R$  as follows,
\begin{alignat*}{12}
&D(\Omega_R)=\{u\in C^\infty_0(\overline{\Omega}_R): u_3=0\,\, \text{on}\,\, x_3=0\},\,\,V_p(\Omega_R)=\overline{\{u\in D(\Omega_R):\nabla\cdot u=0\}}^{\|\nabla\cdot\|_{L^p}},
\\
&  H(\Omega_R)=\overline{D(\Omega_R)}^{\|\cdot\|_{L^2}}.\end{alignat*}
Let $\Gamma_R^1=B_R\cap\{x_3=0\},\,\, \Gamma_R^2=\p B_R\cap\mathbb{R}^3_+$ and $Q=\mathbb{R}^3_+\times [0,T],\,Q_T^R=\Omega_R\times[0,T].$
\begin{Definition}

Let $\frac{6}{5}\le p<\infty,$ under the assumption of Definition  2.1. Let  $f\in H $ or $f\in V_p^*$, which is the dual space of $V_p$, and $u_0\in H$ with $\nabla\cdot u_0=0$ in the sense of distribute. A vector function $u\in L^\infty(0,T;H)\cap L^p(0,T;V_p)$ is called a weak solution to (1.1) if the following identity
\begin{equation}
\begin{aligned}
&-\int_{Q_T} (u\cdot\p_t\phi)\dif x\dif t+&\int_{Q_T}(S(x,t,D(u))-V\otimes u):D(\phi)\dif x\dif t\\
&&=\int_{Q_T} f\cdot\phi \dif x\dif t+\int_{\mathbb{R}^3_+}u_0\cdot\phi(0)\dif x
\end{aligned}\tag{2.6}
\end{equation}
holds for all $\phi\in C^{\infty}(\overline{\mathbb{R}^3_+}\times [0,T])$ with $\di\phi=0,\,\, \phi_3|_{x_3=0}=0$, and $\text{supp} \phi\subset \overline{\mathbb{R}^3_+}\times [0,T).$
\end{Definition}

We need to recall the following Korn-type inequality (see Theorem 3-2 in \cite{ol}.)
\begin{Theorem}
Let $K$ be cone in $\RR^n$ and $p>1$. If $\int_K |D(u)|^p\dif x<+\infty$, then there is a skew-symmetric matrix $A$ with constant coefficients such that 
\begin{equation*}
\begin{aligned}
\int_K|\nabla (u(x)-Ax)|^p\dif x\le C\int_K|D(u)|^p\dif x
\end{aligned}
\end{equation*}
where the constant $C$ does not depend on $u$.
\end{Theorem}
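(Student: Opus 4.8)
The plan is to exploit the scale invariance of the cone $K$ and reduce the estimate to the classical second Korn inequality on a single bounded piece. Since $D(Ax)=\mathbf{0}$ for every skew-symmetric $A$, we have $D(u-Ax)=D(u)$, so the assertion is equivalent to producing a constant skew-symmetric $A$ with $\norm{\nabla u - A}_{L^p(K)}^p \le C\norm{D(u)}_{L^p(K)}^p$; in words, the symmetric gradient controls the full gradient modulo an infinitesimal rotation. First I would decompose $K$ into the (slightly overlapping) dyadic shells $K_j = K\cap\{2^j\le \abs{x} < 2^{j+2}\}$, $j\in\mathbb{Z}$, each of which is an exact dilation $K_j = 2^j K_0$ of the fixed piece $K_0 = K\cap\{1\le\abs{x}<4\}$.

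On the bounded Lipschitz domain $K_0$ the classical second Korn inequality (valid for $1<p<\infty$) furnishes a skew-symmetric matrix controlling $\nabla u$ in terms of $D(u)$. Rescaling via $u_j(y)=u(2^j y)$, I would check that the homogeneity factor $2^{jp}$ coming from $\nabla_y u_j = 2^j(\nabla u)(2^j y)$ and the Jacobian $2^{-jn}$ from $\dif y = 2^{-jn}\dif x$ cancel on both sides, so that for each $j$ there is a skew-symmetric $\tilde A_j$ with
\[
\int_{K_j}\abs{\nabla u - \tilde A_j}^p\,\dif x \le C\int_{K_j}\abs{D(u)}^p\,\dif x,
\]
where $C$ is the constant of $K_0$ and is therefore independent of $j$. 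Summing these shell estimates would already give the result were it not for the fact that the matrix $\tilde A_j$ depends on the shell; the heart of the proof is to replace all of them by a single $A$.

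The main obstacle is exactly this gluing across scales. On the overlap of two consecutive shells, which has volume $\abs{K_j\cap K_{j+1}}\sim 2^{jn}$, the difference $\tilde A_j-\tilde A_{j+1}$ is a constant matrix, so $\norm{\tilde A_j-\tilde A_{j+1}}_{L^p(K_j\cap K_{j+1})}=\abs{\tilde A_j-\tilde A_{j+1}}\,\abs{K_j\cap K_{j+1}}^{1/p}$; the triangle inequality then yields the comparison
\[
\abs{\tilde A_j-\tilde A_{j+1}} \le C\,2^{-jn/p}\big(\norm{D(u)}_{L^p(K_j)}+\norm{D(u)}_{L^p(K_{j+1})}\big).
\]
Because $\sum_j \norm{D(u)}_{L^p(K_j)}^p \lesssim \norm{D(u)}_{L^p(K)}^p<\infty$, Ho\"lder's inequality makes $\sum_{j\ge 0}\abs{\tilde A_j-\tilde A_{j+1}}$ converge, so $\tilde A_j$ has a limit $A$ as $j\to+\infty$; this $A$ is the matrix in the statement. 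The delicate point is the behaviour near the vertex ($j\to-\infty$), where the weight $2^{-jn/p}$ blows up and the sequence $\tilde A_j$ need not converge at all. There one does not need convergence: writing $\abs{\tilde A_j-A}\,\abs{K_j}^{1/p}\lesssim \sum_{m\ge0}2^{-mn/p}\norm{D(u)}_{L^p(K_{j+m})}$, I would recognize the right-hand side as a discrete convolution of the summable sequence $\big(\norm{D(u)}_{L^p(K_j)}\big)_j$ with the $\ell^1$ kernel $c_m=2^{-mn/p}$ ($m\ge 0$, and $c_m=0$ otherwise), and invoke Young's inequality to bound $\sum_j \abs{\tilde A_j-A}^p\,\abs{K_j} \lesssim \norm{D(u)}_{L^p(K)}^p$.

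Finally I would combine the two pieces: from $\abs{\nabla u - A}\le \abs{\nabla u-\tilde A_j}+\abs{\tilde A_j-A}$ on each shell and $(a+b)^p\le 2^{p-1}(a^p+b^p)$, summing over $j$ gives $\norm{\nabla u-A}_{L^p(K)}^p\lesssim \norm{D(u)}_{L^p(K)}^p$, which is the claim. Two hypotheses deserve care and should be recorded: the piece $K_0$ must be a connected bounded Lipschitz domain for the local second Korn inequality to apply (this holds for the half-space $\RR^3_+$ and for Lipschitz cones), and the restriction $1<p<\infty$ enters precisely where that bounded-domain inequality is used, equivalently the $L^p$-boundedness of the Riesz-transform-type multipliers recovering $\nabla u$ from $D(u)$; the null space of that symbol is exactly the constant skew-symmetric matrices, which is the structural reason the matrix $A$ appears.
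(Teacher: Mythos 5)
Your argument is correct, but be aware that the paper does not actually prove this statement: Theorem 2.4 is quoted from Kondrat'ev--Oleinik (Theorem 3-2 of \cite{ol}), so there is no internal proof to compare against, and what you have written is essentially a self-contained reconstruction of their method for unbounded domains. The three pillars of your proof are all sound: the dyadic shells $K_j=2^jK_0$ with $K_0=K\cap\{1\le|x|<4\}$, the exact cancellation of the homogeneity factors $2^{jp}$ and $2^{-jn}$ so that the second Korn inequality on $K_0$ transfers to every shell with the same constant and a shellwise skew matrix $\tilde A_j$, and the patching step in which $|\tilde A_j-\tilde A_{j+1}|\le C\,2^{-jn/p}(\|D(u)\|_{L^p(K_j)}+\|D(u)\|_{L^p(K_{j+1})})$ is summed via the $\ell^1$-kernel $2^{-mn/p}$ and Young's convolution inequality. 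Choosing $A=\lim_{j\to+\infty}\tilde A_j$ and noting that no convergence is needed near the vertex because $|K_j|\to0$ there is exactly the right way to organize the gluing. Two hypotheses you should record explicitly if you write this up: (i) the assumption $\int_K|D(u)|^p\,\mathrm{d}x<\infty$ alone does not literally place $u$ in $W^{1,p}_{\mathrm{loc}}$, so on $K_0$ you are implicitly using the strong local form of Korn's second inequality for distributions with $D(u)\in L^p$, which holds on bounded Lipschitz domains by Ne\v{c}as's negative-norm lemma; (ii) the cone must have a Lipschitz (or at least John) cross-section so that $K_0$ supports that local inequality --- this is satisfied by $\mathbb{R}^3_+$, the only case the paper uses. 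Compared with the paper's bare citation, your route buys an explicit, quantitative constant depending on the cone only through the Korn constant of the single bounded piece $K_0$ and the overlap multiplicity of the shells.
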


The previous result leads to the following.
\begin{Corollary}
There exists a constant C depending only on  $p$ such that $$\|\nabla u\|_p\le C\|D(u)\|_p$$ for all $u\in C_0^{\infty}(\overline{\mathbb{R}^3_+})$.
\end{Corollary}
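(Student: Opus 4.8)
The plan is to apply Theorem 2.4 directly to the half-space, which is itself a cone with vertex at the origin, and then to exploit the compact support of $u$ to eliminate the skew-symmetric correction matrix. First I would observe that for any $u\in C_0^\infty(\overline{\mathbb{R}^3_+})$ the symmetric gradient $D(u)$ is continuous with compact support, so $\int_{\mathbb{R}^3_+}|D(u)|^p\,\dif x<\infty$ and Theorem 2.4 (with $K=\mathbb{R}^3_+$) furnishes a constant skew-symmetric matrix $A$ satisfying
\[
\int_{\mathbb{R}^3_+}|\nabla u - A|^p\,\dif x = \int_{\mathbb{R}^3_+}|\nabla (u(x)-Ax)|^p\,\dif x \le C\int_{\mathbb{R}^3_+}|D(u)|^p\,\dif x,
\]
where I have used that $\nabla(Ax)=A$ and that $D(u)=D(u-Ax)$, the latter because the skew-symmetry of $A$ gives $D(Ax)=\tfrac12(A+A^{\mathrm{T}})=0$.

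The hard part, and really the only substantive point, will be showing that $A=0$. Here I would invoke the compact support of $u$: choose $R>0$ with $\mathrm{supp}\,u\subset B_R$, so that $\nabla u\equiv 0$ on the unbounded region $\{x\in\mathbb{R}^3_+:|x|>R\}$, whence $|\nabla u - A|=|A|$ there. Consequently
\[
\int_{\{x\in\mathbb{R}^3_+:\,|x|>R\}}|A|^p\,\dif x \le \int_{\mathbb{R}^3_+}|\nabla u - A|^p\,\dif x \le C\int_{\mathbb{R}^3_+}|D(u)|^p\,\dif x<\infty.
\]
Since $\{x\in\mathbb{R}^3_+:|x|>R\}$ has infinite Lebesgue measure, the left-hand side can be finite only if $|A|=0$, that is $A=0$. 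This forced vanishing of the correction term is precisely the feature that distinguishes compactly supported fields from the general cone setting of Theorem 2.4, where $A$ need not vanish.

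With $A=0$ in hand, the first displayed inequality reduces to $\int_{\mathbb{R}^3_+}|\nabla u|^p\,\dif x\le C\int_{\mathbb{R}^3_+}|D(u)|^p\,\dif x$, which upon taking $p$-th roots is exactly $\|\nabla u\|_p\le C\|D(u)\|_p$ with the constant $C$ depending only on $p$, inherited directly from Theorem 2.4. I would emphasize that no density or approximation argument is required for the statement as phrased, since it is asserted only for smooth compactly supported fields; the entire role of compact support is to kill the matrix $A$.
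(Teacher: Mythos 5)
Your argument is correct and is essentially the approach the paper intends: it applies the Korn-type inequality of Theorem 2.4 to the half-space viewed as a cone and then eliminates the skew-symmetric matrix $A$, which is exactly what the paper's one-line proof delegates to the proof of Corollary 1 in the Galdi--Grisanti reference. Your explicit infinite-measure argument for forcing $A=0$ from the compact support of $u$ is the right way to fill in that citation.
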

\begin{proof}
Since the domain $\RR^3_+$ is a special cone in $\RR^3$, therefore, along the proof Corollary 1 in \cite{ga},  it is easy to get the result by Theorem 2.5.\end{proof}

 To construct the basis in $W^{2,2}(\Omega_R)$  with the boundary conditions (1.5), we consider the following  problem
 \begin{equation}\label{2.7}\tag{2.7}
\left\{\begin{aligned} &-\Delta u+\nabla p=f, &&\mbox{in}~~\Omega_R \\[3mm]
&\nabla\cdot u = 0, &&\mbox{in}~~\Omega_R, \\[2mm]
&u_3=0,~~~\frac{\p u_1}{\p x_3}=\frac{\p u_2}{\p x_3}=0  &&\mbox{on}~~\Gamma_R^1,\\[2mm]\\
&u=0    &&\mbox{on}~~\Gamma_R^2,\\
\end{aligned}\right.
\end{equation}
The following definition 2.6, Lemma 2.7 and its proof will be found in \cite{mar}.
\begin{Definition}
By a weak solution of the problem (2.7) we mean a function $u(x)\in V_2(\Omega_R)$ such that $$(D(u),D(v))=(f,v), \,\,\forall v\in V_2(\Omega_R)$$
\end{Definition}
\begin{Lemma}
Assume $f\in H(\Omega_R),$ then there exists a unique solution $(u(x),p(x))$ to problem (2.7) such that $u\in V_2(\Omega_R)\cap W^{2,2}(\Omega_R)$. Moreover, the following estimates hold:
\begin{equation}\label{2.8}\tag{2.8}
\begin{aligned}
&\|D(u)\|_{L^2(\Omega_R)}\le C\|f\|_{H(\Omega_R)}\\
&\|\nabla^2 u\|_{L^2(\Omega_R)}+\|\nabla p\|_{L^2(\Omega_R)}\le C\left(\|f\|_{H(\Omega_R)}+\|u\|_{V_2(\Omega_R)}\right)\\
&\|\nabla u\|_{L^3(\Omega_R)}\le C\left(\|f\|_{H(\Omega_R)}^{\frac{1}{2}}\|\nabla u\|^{\frac{1}{2}}_{L^2(\Omega_R)}+\|u\|_{V_2(\Omega_R)}\right),
\end{aligned}
\end{equation}
where $C$ is independent of $u,\,f$.
\end{Lemma}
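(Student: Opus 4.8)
The plan is to treat (2.7) as a linear Stokes system with mixed boundary data and proceed in four stages: Lax--Milgram for the weak velocity, de Rham for the pressure, difference quotients for the $W^{2,2}$ regularity, and Gagliardo--Nirenberg interpolation for the $L^3$ gradient bound.

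First I would establish existence and uniqueness of the weak solution $u\in V_2(\Omega_R)$ in the sense of Definition 2.6. Consider the bilinear form $a(u,v)=(D(u),D(v))$ on $V_2(\Omega_R)$, which is obviously bounded. For coercivity, Korn's inequality on the bounded Lipschitz domain $\Omega_R$ (the analogue of Corollary 2.5, available because $u$ vanishes on $\Gamma_R^2$) gives $\|\nabla u\|_{L^2}\le C\|D(u)\|_{L^2}$, while the homogeneous Dirichlet condition on the piece $\Gamma_R^2$ of positive surface measure yields the Poincar\'e inequality $\|u\|_{L^2}\le C\|\nabla u\|_{L^2}$. Hence $a(u,u)=\|D(u)\|_{L^2}^2\ge c\|u\|_{V_2}^2$, and Lax--Milgram produces a unique $u\in V_2(\Omega_R)$ with $(D(u),D(v))=(f,v)$ for all $v$. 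Testing with $v=u$ and using $|(f,u)|\le\|f\|_{H}\|u\|_{L^2}\le C\|f\|_{H}\|D(u)\|_{L^2}$ yields the first estimate of (2.8).

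Next, to recover the pressure I would use that $v\mapsto (D(u),D(v))-(f,v)$ annihilates all divergence-free test fields, so by the de Rham/Ne\u cas theory (surjectivity of $\di$ onto $L^2_0(\Omega_R)$, i.e.\ the inf--sup condition on the Lipschitz domain $\Omega_R$) there exists $p\in L^2(\Omega_R)$, unique up to an additive constant, with $-\Delta u+\nabla p=f$ in $\mathcal D'(\Omega_R)$. The heart of the matter is the $W^{2,2}$ regularity, which I expect to be the main obstacle, since the slip piece $\Gamma_R^1$ and the Dirichlet piece $\Gamma_R^2$ meet along an edge. I would localize with a partition of unity and treat three cases. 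Interior patches are handled by Nirenberg's difference-quotient method in all directions. Near the flat part $\Gamma_R^1=\{x_3=0\}$, the slip conditions $u_3=0,\ \p_3 u_1=\p_3 u_2=0$ are exactly the compatibility conditions for reflection: extending $u_1,u_2$ evenly, $u_3$ oddly, and $p$ evenly across $x_3=0$ gives a Stokes solution on a full neighbourhood, so tangential difference quotients plus the equation (to recover $\p_3^2 u$ and $\nabla p$) give the bound. Near the smooth Dirichlet part $\Gamma_R^2$ this is classical Stokes boundary regularity: flatten the boundary, take tangential difference quotients, and use incompressibility and the momentum equation for the remaining normal derivatives, the curvature producing lower-order terms absorbed into $\|u\|_{V_2}$. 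Summing gives $\|\nabla^2 u\|_{L^2}+\|\nabla p\|_{L^2}\le C(\|f\|_H+\|u\|_{V_2})$, the second line of (2.8).

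Finally, the $L^3$ gradient estimate follows by interpolation. In three dimensions the Gagliardo--Nirenberg inequality gives $\|\nabla u\|_{L^3}^2\le C\|\nabla u\|_{L^2}\|\nabla^2 u\|_{L^2}+C\|u\|_{V_2}^2$. Inserting the second estimate $\|\nabla^2 u\|_{L^2}\le C(\|f\|_H+\|u\|_{V_2})$, using $\|\nabla u\|_{L^2}\le\|u\|_{V_2}$ to control the cross term $\|\nabla u\|_{L^2}\|u\|_{V_2}\le C\|u\|_{V_2}^2$, and taking square roots with $\sqrt{a+b}\le\sqrt a+\sqrt b$, I obtain $\|\nabla u\|_{L^3}\le C(\|f\|_H^{1/2}\|\nabla u\|_{L^2}^{1/2}+\|u\|_{V_2})$, which is the third line of (2.8). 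This completes the proposed argument.
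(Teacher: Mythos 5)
The paper does not actually prove this lemma: it states that ``definition 2.6, Lemma 2.7 and its proof will be found in \cite{mar}'' (Maremonti), so there is no in-paper argument to compare yours against. Judged on its own, your reconstruction is the natural one and is essentially sound: Lax--Milgram with Korn plus Poincar\'e (both valid here because elements of $V_2(\Omega_R)$ vanish on the portion $\Gamma_R^2$ of positive measure, which kills the rigid motions) gives the weak solution and the first estimate; de Rham/Ne\v{c}as recovers $p$ up to a constant; and the $L^3$ bound does follow from $\|\nabla u\|_{L^3}\le C\|\nabla u\|_{L^2}^{1/2}\|\nabla u\|_{W^{1,2}}^{1/2}$ combined with your second estimate, exactly as you compute.

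The one genuine gap is in the $W^{2,2}$ step. You name the difficulty --- the slip piece $\Gamma_R^1$ and the Dirichlet piece $\Gamma_R^2$ meet along the equatorial circle $\{|x|=R,\ x_3=0\}$ --- but your three-case localization (interior, near the flat part, near the curved Dirichlet part) never actually covers a neighbourhood of an edge point: there the even/odd reflection does \emph{not} produce a solution on a full ball, and mixed boundary-value problems can genuinely fail to be $H^2$ at such junctions, so this cannot be waved through as one of the two boundary cases. The clean repair, which is also the philosophy the paper uses throughout (cf.\ the reflection (3.1) in Section 3), is to perform the even/odd reflection across $\{x_3=0\}$ \emph{globally and first}: the conditions $u_3=0$, $\p_3 u_1=\p_3 u_2=0$ on $\Gamma_R^1$ are precisely the compatibility conditions for the extension, $\Omega_R$ becomes the full ball $B_R$, the edge circle becomes an ordinary point of the smooth sphere $\p B_R$ carrying pure Dirichlet data, and the classical Cattabriga/ADN regularity theory for the Dirichlet--Stokes system on a smooth bounded domain yields $\|\nabla^2 u\|_{L^2}+\|\nabla p\|_{L^2}\le C(\|f\|_{H}+\|u\|_{V_2})$ directly. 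With that reordering your argument closes; as written, the edge region is unaccounted for.
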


With the aid of previous lemma, one can prove the following proposition
\begin{Proposition} The eigenvalue problem
\begin{equation*}
\left\{\begin{aligned} &-\Delta u+\nabla p=\lambda u, &&\mbox{in}~~\Omega_R \\[3mm]
&\nabla\cdot u = 0, &&\mbox{in}~~\Omega_R, \\[2mm]
&u_3=0,~~~\frac{\p u_1}{\p x_3}=\frac{\p u_2}{\p x_3}=0  &&\mbox{on}~~\Gamma_R^1,\\[2mm]\\
&u=0    &&\mbox{on}~~\Gamma_R^2,\\
\end{aligned}\right.
\end{equation*}
$\lambda \in\mathbb{R},\,\, u\in V_2(\Omega_R) $ admits a denumberable positive eigenvalue $\{\lambda_i\}$ clustering at infinity. Moreover, the corresponding eigenfunctions $\{a_i\}$ are in $W^{2,2}(\Omega_R),$ and associate pressure fields $p_i\in W^{1,2}(\Omega_R)$. Finally, $\{a_i\}$ are orthogonal and complete in $H(\Omega_R)$ and $V_2(\Omega_R).$
\end{Proposition}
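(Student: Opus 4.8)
The plan is to realize this eigenvalue problem as the spectral decomposition of the compact, self-adjoint solution operator associated with the Stokes-type system (2.7). First I would define the solution operator $T\colon H(\Omega_R)\to H(\Omega_R)$ by $Tf=u$, where $u\in V_2(\Omega_R)\cap W^{2,2}(\Omega_R)$ is the unique weak solution of (2.7) provided by Lemma 2.7. In the sense of Definition 2.6 this means $(D(Tf),D(v))=(f,v)$ for all $v\in V_2(\Omega_R)$, and the estimates (2.8) guarantee that $T$ is a bounded operator that in fact maps $H(\Omega_R)$ into $V_2(\Omega_R)$.

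Next I would verify the three structural properties needed for the Hilbert--Schmidt spectral theorem. For symmetry, taking $v=Tg$ in the weak formulation gives $(Tf,g)=(D(Tf),D(Tg))=(f,Tg)$, so $T$ is self-adjoint on $H(\Omega_R)$. For positivity, choosing $v=Tf$ yields $(Tf,f)=\|D(Tf)\|_{L^2(\Omega_R)}^2\ge 0$, and by the Korn inequality of Corollary 2.5 together with the Dirichlet condition on $\Gamma_R^2$ (which rules out nontrivial rigid motions) this quantity vanishes only when $Tf=0$; hence $T$ is positive. For compactness, since $\Omega_R$ is bounded the embedding $V_2(\Omega_R)\hookrightarrow H(\Omega_R)$ is compact by Rellich's theorem, and because $T$ maps $H(\Omega_R)$ boundedly into $V_2(\Omega_R)$ by (2.8), it is compact as an operator on $H(\Omega_R)$.

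With these facts the spectral theorem for compact self-adjoint positive operators produces a nonincreasing sequence of positive eigenvalues $\mu_i\to 0^{+}$ and an orthonormal basis $\{a_i\}$ of $H(\Omega_R)$ with $Ta_i=\mu_i a_i$. Setting $\lambda_i=\mu_i^{-1}$ then exhibits each $a_i$ as a weak solution of the stated eigenvalue problem with $\lambda_i\to\infty$, so the eigenvalues are denumerable, positive, and cluster at infinity. The regularity assertions follow by inserting $f=\lambda_i a_i\in H(\Omega_R)$ back into Lemma 2.7, which places $a_i\in W^{2,2}(\Omega_R)$ and the associated pressure $p_i\in W^{1,2}(\Omega_R)$.

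The step I expect to be the main technical point is upgrading completeness from $H(\Omega_R)$ to $V_2(\Omega_R)$. The eigenrelation $(D(a_i),D(v))=\lambda_i(a_i,v)$ shows that $\{a_i\}$ is also orthogonal for the Dirichlet inner product $(D(\cdot),D(\cdot))$, and by Corollary 2.5 this inner product induces a norm equivalent to the $V_2$ norm on $\Omega_R$. If some $w\in V_2(\Omega_R)$ were $(D(\cdot),D(\cdot))$-orthogonal to every $a_i$, then $\lambda_i(w,a_i)=(D(w),D(a_i))=0$ would force $(w,a_i)=0$ for all $i$, and completeness in $H(\Omega_R)$ would give $w=0$; hence $\{a_i\}$ is complete in $V_2(\Omega_R)$ as well. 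The delicate ingredient here is the coercivity estimate $\|D(v)\|_{L^2(\Omega_R)}\ge c\,\|v\|_{V_2(\Omega_R)}$, which I would obtain from the Korn-type inequality of Corollary 2.5 adapted to $\Omega_R$, using the homogeneous Dirichlet data on $\Gamma_R^2$ to eliminate the skew-symmetric correction matrix.
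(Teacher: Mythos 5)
Your proposal is correct and follows essentially the same route as the paper: both realize the eigenvalue problem through the solution operator of (2.7) from Lemma 2.7, establish that it is a compact, positive, self-adjoint operator on $H(\Omega_R)$ (compactness via Rellich on the bounded domain $\Omega_R$), apply the spectral theorem, and recover the $W^{2,2}$ regularity of the eigenfunctions and the $W^{1,2}$ pressures by feeding $f=\lambda_i a_i$ back into Lemma 2.7. Your treatment is in fact more complete than the paper's, since you also spell out the completeness of $\{a_i\}$ in $V_2(\Omega_R)$ via the equivalence of the $(D(\cdot),D(\cdot))$ inner product with the $V_2$ norm, which the paper merely asserts.
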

\begin{proof}
The mapping $A:\ \ f\longrightarrow u$ defined by Lemma 2.7 is  linear and continuous from $H(\Omega_R)$ onto $V_2(\Omega_R)$, into $W^{1,2}(\Omega_R)$. Since $\Omega_R$ is bounded, by Rellich  Theorem, we know that $W^{1,2}(\Omega_R)\hookrightarrow L^2(\Omega_R)$ is compact. It is easy to know that operator $A$ is a positive symmetric and self-adjoint operator on $L^(\Omega_R)$. Therefore, $A$ possess an sequence of eigenfunctions $a_i$:
\begin{eqnarray*}
&&A a_i=\lambda_i a_i\,\, k\ge 0, \lambda_i>0,\,\, \lambda_i\to\infty ~\mbox{as}~ k\to\infty\\
&&(a_i,a_j)_{L^2}=\delta_{i,j},\,\,(D(a_i),D(a_j))_{L^2}=\lambda_k\delta_{i,j}.
\end{eqnarray*}
By Lemma 2.7, we can get for each $i$, there exists $p_i$ with the estimates (2.8).
\end{proof}

\section{Main results and their proofs  }
To study the well-posedness  of problem (1.1), we
define a reflection as follows
\begin{equation}\label{3.1}\tag{3.1}
u^*(x)=\left\{\begin{aligned} &(u_1(x_1,x_2,x_3),u_2(x_1,x_2,x_3),u_3(x_1,x_2,x_3)) &&\mbox{if\,\,}x_3\ge0; \\[3mm]
&(u_1(x_1,x_2,-x_3),u_2(x_1,x_2,-x_3),-u_3(x_1,x_2,-x_3), &&\mbox{if\,\,}x_3<0.
\end{aligned}\right.
\end{equation}

 Next, we will show the existence,  uniqueness  of strong solutions to the problem (1.1). We give the definition of the strong solution for the problem (1.1) as follows
\begin{Definition}
We say a couple $(u,\pi)$ is a strong solution to problem (1.1) if
\begin{equation}\label{3.2}\tag{3.2}
\begin{aligned}
&u\in L^\infty(0,T;W_{\rm loc}^{1,2}(\overline{\Omega}))\cap L^{p}(0,T;W^{2,p}_{\rm loc}(\overline{\Omega}))\cap L^p(0,T; V_p)\cap L^\infty (0,T;H)\\
&\frac{\p u}{\p t}\in L^2(0,T;L^2_{\rm loc}(\overline{\Omega})); \pi\in L^{p'}(0,T;L^{p'}_{\rm loc}(\overline{\Omega}).
\end{aligned}
\end{equation}
where $p'=\frac{p}{p-1}$ and satisfies the weak formulation
\begin{equation}\label{3.3}\tag{3.3}
\begin{aligned}
\int_{\Omega}\frac{\p u}{\p t}\varphi\dif x +\int_{\Omega} S(D(u)):D(\varphi)\dif x +\int_{\Omega} (V\cdot\nabla)u\cdot\varphi\dif x\\
=\int_{\Omega} \pi\di \varphi\dif x+\int_{\Omega} f \varphi\dif x.
\end{aligned}
\end{equation}
 holds for all $\varphi\in C^{\infty}_0(\overline{\Omega})$ and almost all $t\in (0,T)$, at same time, the boundary conditions hold in the sense of trace.
\end{Definition}

At first, we  provide the definition of difference  and recall a well-known result. Fixed any  domain $\Omega\subset\overline{\mathbb{R}^3_+}, \Omega'\subset\subset\Omega$, and we put $\delta(\Omega',\Omega)=\rm{dist}(\Omega',\p\Omega\setminus\{x_3=0\}).$

\begin{Definition}
For  $g:\Omega\longrightarrow\mathbb{R}^3$ we set
$$(\Delta_{\lambda,k}g)(x)=g(x+\lambda e_k)-g(x),\,\,x\in\Omega',0<\lambda<\delta(\Omega',\Omega),k=1\cdots 3.$$
where $e_1,e_2,e_3$ is the canonical base of $\mathbb{R}^3.$ We shall omit the dependence on $k$ where the meaning is clear.

\end{Definition}
\begin{Lemma}
For any $u\in W^{1,p}(\Omega)$ and $0<|\lambda|<\delta(\Omega',\Omega)$ it holds
$$\|\Delta_{\lambda,k} u\|_{p,\Omega'}\le|\lambda|\|u_{,k}\|_{p,\Omega}.$$
\end{Lemma}

For above $\Omega, \Omega' $ and $\delta(\Omega',\Omega)$, then the following theorem and lemma show that the regularity and uniqueness  of the weak solutions to the problem (1.1).
\begin{Theorem}
Let $\frac{9}{5}<p<2, f\in L^{p'}(Q_T)$, $V\in L^\infty(0,T;W^{2,2}(\Omega))$, $u_0\in V_2\cap H$ satisfy the boundary conditions (1.5), and $S$ be  given by a p-potential from Definition 2.1. If  $u\in L^p(0,T; V_p)\cap L^\infty (0,T;H)$ is the weak solution for problem (1.1), then this solution is also a unique strong solution to problem (1.1) such that
\begin{alignat*}{12}
&\|u\|_{L^{\infty}(0,T; W^{1,2}(\Omega'))\cap L^p(0,T; W^{2,p}(\Omega'))}\le C(|\Omega'|,u_0,f,T,\|V\|_{L^\infty((0,T)\times\Omega)}),\,\ \tag{3.4}\\
&\int^T_0\int_{\Omega'}(1+|D(u)|)^{p-2}|\nabla D(u)|^2\dif x\dif t\le C(|\Omega'|,u_0,f,T,\|V\|_{L^\infty((0,T)\times\Omega)}).\tag{3.5}\\
&\|\frac{\p u}{\p t}\|^2_{L^2((0,T)\times\Omega')}+\|\Phi(D(u))\|_{L^\infty(0,T;L^1(\Omega'))}\le C(\delta(\Omega',\Omega),|\Omega'|,u_0,f,T,\|V\|_{L^\infty((0,T)\times\Omega)}).\tag{3.6}
\end{alignat*}
\end{Theorem}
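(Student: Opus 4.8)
The plan is to promote the given weak solution to a strong one by the Nirenberg method of translations (difference quotients), localized near interior points and near the flat boundary $\{x_3=0\}$, and then to obtain uniqueness directly from the regularity so gained. First I would invoke the reflection (3.1): because $S$ is induced by a $p$-potential and $u$ satisfies (1.5), the reflected field $u^*$ together with the reflected data solves the same system across $\{x_3=0\}$, so every point of the flat boundary becomes an interior point of the extended problem. Consequently all the work reduces to interior difference-quotient estimates, with the understanding that translations in the tangential directions $k=1,2$ are admissible directly, while the normal translation $k=3$ is admissible only after this reflection. Fixing a cutoff $\xi\in C_0^\infty$ with $0\le\xi\le1$ supported in a ball, I would test the weak formulation with $\varphi=-\Delta_{-\lambda,s}(\xi^2\Delta_{\lambda,s}u)$ and use the summation-by-parts identity $\int g\,\Delta_{-\lambda}h=\int(\Delta_\lambda g)\,h$.

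With this choice the stress term becomes $\int \xi^2\,\Delta_\lambda S(D(u)):\Delta_\lambda D(u)$ plus commutators each carrying a factor $\nabla\xi$. The monotonicity inequality (2.4) bounds the principal part below by $c_1\gamma_1\int\xi^2(1+|D(u)|^2+|D(u(\cdot+\lambda e_s))|^2)^{(p-2)/2}|\Delta_\lambda D(u)|^2$; dividing by $\lambda^2$ and sending $\lambda\to0$ produces exactly the weighted second-order estimate (3.5). Simultaneously the time term yields $\tfrac12\frac{d}{dt}\int\xi^2|\Delta_\lambda u|^2$, whose integration gives $\nabla u\in L^\infty(0,T;L^2(\Omega'))$, the first half of (3.4); and the growth bound (2.5) controls the $\nabla\xi$-commutators against the energy norm and a fraction of the dissipation. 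To make these manipulations rigorous for a mere weak solution, I would carry them out on the Galerkin approximations built from the basis of Proposition 2.8 (where the time derivative is classical) and pass to the limit, the relevant estimates being preserved in the limit.

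The main obstacle is twofold. First, $\xi^2\Delta_{\lambda,s}u$ is not divergence-free, so the pressure survives; I would control it through a local pressure decomposition (a Bogovskii-type corrector applied to the $\xi$-terms), estimating $\pi$ in $L^{p'}$ from the equation, which is also the source of the $\pi$-regularity required by the strong-solution class. Second, and decisively, the convective term $\int(V\cdot\nabla)u\cdot\varphi$ must be absorbed into the dissipation. Here I would integrate by parts in the translation direction, use $\di V=0$ to annihilate the top-order self-interaction, bound $V$ via the embedding $W^{2,2}(\Omega)\hookrightarrow L^\infty$ valid in $\mathbb{R}^3$ together with $\nabla V\in L^6$, split off the weight by Young's inequality, and interpolate $\nabla u$ between the bound $L^\infty(0,T;L^2)$ just obtained and the second-order dissipation through the parabolic embedding of $L^\infty(0,T;L^2)\cap L^p(0,T;V_p)$. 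The restriction $\frac{9}{5}<p$ is precisely the threshold at which this interpolation renders the convective contribution subcritical in three dimensions, so that, after absorption, a Gr\"onwall argument closes all three estimates.

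It remains to read off the stated norms. The passage from (3.5) to $u\in L^p(0,T;W^{2,p}(\Omega'))$ in (3.4) is the H\"older splitting $|\nabla D(u)|^p=\big[(1+|D(u)|)^{(p-2)/2}|\nabla D(u)|\big]^p(1+|D(u)|)^{(2-p)p/2}$ together with $D(u)\in L^p$ from the energy estimate and Korn's inequality (Corollary 2.6) to replace $\nabla D(u)$ by $\nabla^2u$. Rewriting the equation as $\partial_t u=\di S(D(u))-(V\cdot\nabla)u-\nabla\pi+f$ then places $\partial_t u\in L^2((0,T)\times\Omega')$, while testing with $\partial_t u$ and using $\frac{d}{dt}\int\Phi(D(u))=\int S(D(u)):D(\partial_t u)$ bounds $\|\Phi(D(u))\|_{L^\infty(0,T;L^1(\Omega'))}$, giving (3.6). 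Finally, for uniqueness I would subtract two strong solutions, test the difference with $u-\tilde u$, keep the nonnegative dissipation coming from monotonicity (2.4), and control the convective and stress differences using the $W^{2,p}$-regularity just established; since $u(0)=\tilde u(0)$, Gr\"onwall's lemma forces $u\equiv\tilde u$.
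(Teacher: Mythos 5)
Your proposal follows essentially the same route as the paper: reflect across $\{x_3=0\}$ to reduce to a whole-space (interior) problem, test with $\Delta_{-\lambda}(\eta^2\Delta_\lambda u)$, use the monotonicity (2.4) for the weighted second-order dissipation, recover the pressure locally in $L^{p'}$, absorb the convective term by interpolating $\nabla u$ between $L^\infty(0,T;L^2)$ and the dissipation (with $p>\tfrac{9}{5}$ as the threshold), close by Gr\"onwall, then test with $\eta^2 u_t$ for (3.6) and prove uniqueness by the energy method. The only cosmetic deviations are that the paper justifies the testing by invoking the known weak solution of the reflected Cauchy problem (citing Pokorn\'y) rather than a Galerkin regularization, and obtains the local pressure bound from Wolf's estimate rather than a Bogovskii corrector.
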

\begin{proof}
  Firstly  we extent the $u_0$ and force term $f$ to the whole space by the reflection defined in (3.1), however, since $V\in W^{2,2}$, we need apply the extension of Theorem 5.19 in \cite {ad}, Denote these functions by $u^*_0,\,\, \,\,  f^*,\,\,\,\,V^*$ respectively.

 We begin to consider the Cauchy problem as follows
\begin{equation}\label{3.7}\tag{3.7}
\left\{\begin{aligned} &\partial_t v-\di S(D(v))+(V^*\cdot\nabla)v+\nabla \pi=f^*, &&\mbox{in}~~\mathbb{R}^3\times (0, T) \\[3mm]
&\nabla\cdot v = 0, &&\mbox{in}~~\mathbb{R}^3\times (0, T), \\[2mm]
&v|_{t=0}=u_0^*(x),&& \mbox{in}~~\mathbb{R}^3.
\end{aligned}\right.
\end{equation}
From \cite {po}, There exists a weak solution $u\in L^p(0,T; V_p(\mathbb{R}^3))\cap L^\infty (0,T;L^2(\mathbb{R}^3))$ to problem (3.7), then by interpolation inequality, one follows that $u\in L^{\frac{5p}{3}}((0,T)\times \mathbb{R}^3)$. If $p>\frac{9}{5}$ then $p'<\frac{5p}{3}$, and $V^*\otimes u\in L^{p'}((0,t)\times B_a),$ since $V^*\in L^\infty.$
Along the proof of Theorem 2.6 in\cite{wo}, we know that$$ \|\pi\|_{L^{p'}((0,t)\times B_a)}\le C(\|u\|_{L^{p'}((0,t)\times B_a)}+\|S+f^*\|_{L^{p'}((0,t)\times B_a)}+1)$$ where  $B_a$ is any ball of $\RR ^3$ with radius $a$, $C$ only depends on $p,T,f, u_0,a.$

For any $\rho>0$ such that $0<\rho<\delta(\Omega', \Omega)$, Set $\Omega_\rho=\{x\in \Omega;{\rm dist}(x,\Omega')<\rho\}$. Now fix $r<\frac{1}{4}\delta(\Omega',\Omega)$, there exists a ball $B_a$, such that $\Omega\subset B_a$
Let us choose a  cut-off function $\eta$ such that $\eta\equiv1$ on $\Omega_r$, $\eta\equiv 0$ in $\mathbb{R}^3\setminus \Omega_{2r}$, $0\le\eta\le1$ and $|\nabla\eta|<\frac{C}{r},\,\, |\nabla^2\eta|<\frac{C}{r^2}$ in $\Omega_{2r}$, where the constant $C$ depends only on the geometry of $\p\Omega$.

If $|\lambda|<r$, it results that $\Delta_{-\lambda}(\eta^2\Delta_\lambda u)\in L^2((0,T)\times\Omega_{3r})\cap L^p(0,T; W_0^{1,p}(\Omega_{3r})),$ but it is not divergence free. Take $\Delta_{-\lambda}(\eta^2\Delta_\lambda u)$ as a test function in the  first equation  of (3.7), we obtain
\begin{equation*}
\begin{aligned}
&\langle u_t,\Delta_{-\lambda}(\eta^2\Delta_\lambda u)\rangle +(S(D(u)),D(\Delta_{-\lambda}(\eta^2\Delta_\lambda u)))+(V^*\cdot\nabla u,\Delta_{-\lambda}(\eta^2\Delta_\lambda u))\\
&=(\pi,\di(\Delta_{-\lambda}(\eta^2\Delta_\lambda u)))+(f,\Delta_{-\lambda}(\eta^2\Delta_\lambda u)).
\end{aligned}
\end{equation*}
Set
\begin{equation*}
\begin{aligned}
&J_1=\langle u_t,\Delta_{-\lambda}(\eta^2\Delta_\lambda u)\rangle, \\
&J_2=(S(D(u)),D(\Delta_{-\lambda}(\eta^2\Delta_\lambda u))),\\
&J_3=(V^*\cdot\nabla u,\Delta_{-\lambda}(\eta^2\Delta_\lambda u)),\\
&J_4=(\pi,\di(\Delta_{-\lambda}(\eta^2\Delta_\lambda u))),\\
&J_5=(f,\Delta_{-\lambda}(\eta^2\Delta_\lambda u)).
\end{aligned}
\end{equation*}
Clearly, $J_1=\frac{\dif}{\dif t}\|\eta\Delta_\lambda u\|^2_{L^2(\Omega_{2r})}$. Let $I_\lambda(u)=\int_{\Omega_{2r}}(1+|D(u)(x+\lambda e_k)|+|D(u)|)^{p-2}|\eta\Delta_\lambda D(u)|^2\dif x.$

\begin{equation*}
\begin{aligned}
&J_2=2\int_{\Omega_{3r}}S(D(u))\Delta_{-\lambda}sym(\Delta_{\lambda}u\otimes\eta\nabla\eta)\dif x-\int_{\Omega_{2r}}\eta^2(\Delta_\lambda(S(D(u)))\Delta_\lambda D(u))\dif x\\
&:=J_{21}-J_{22}.
\end{aligned}
\end{equation*}
Since (2.3),
\begin{equation*}
\begin{aligned}
 &J_{22}\ge C_1\gamma_1\int_{\Omega_{2r}}(1+|D(u)(x+\lambda e_k)|+|D(u)|)^{p-2}|\eta\Delta_\lambda D(u)|^2\dif x=C_1\gamma_1 I_\lambda(u),\\
 &|J_{21}|\le c\|S(D(u))\|_{L^{p'}(\Omega_{3r})}\|\Delta_{-\lambda}sym(\Delta_{\lambda}u\otimes\eta\nabla\eta)\|_{L^p(\Omega_{3r})},
 \end{aligned}
\end{equation*}
Thus
\begin{equation*}
\begin{aligned}
&\|\Delta_{-\lambda}sym(\Delta_{\lambda}u\otimes\eta\nabla\eta)\|_{L^p(\Omega_{3r})}\le |\lambda|\|\nabla sym(\Delta_{\lambda}u\otimes\eta\nabla\eta)\|_{L^p(\Omega_{3r})},\\
&\le |\lambda|\left[\| \nabla\eta|sym(\Delta_{\lambda}u\otimes\nabla\eta)|\|_{L^p(\Omega_{3r})}+\|\eta sym(\Delta_{\lambda}\nabla u_{x_k}\otimes\nabla\eta)\|_{L^p(\Omega_{3r})}\right.\\
&\left.+\|sym(\Delta_\lambda u\otimes\nabla\eta_{x_k})|\|_{L^p(\Omega_{3r})}\right]\\
&\le 4C\frac{\lambda^2}{r^2}\|\nabla u\|_{L^p(\Omega_{3r})}+\frac{2C|\lambda|}{r}\left(\int_{\Omega_{2r}}|\eta\nabla(\Delta_\lambda u)|^p\dif x\right)^{\frac{1}{p}}.
 \end{aligned}
\end{equation*}
Since $\eta\nabla(\Delta_\lambda u)=\nabla (\eta\Delta_\lambda u)-(\nabla\eta)\cdot\Delta_\lambda u$, thus
$$\left(\int_{\Omega_{2r}}|\eta\nabla(\Delta_\lambda u)|^p\dif x\right)^{\frac{1}{p}}\le \frac{C|\lambda|}{r}\|\nabla u\|_{L^p(\Omega_{3r})}+C_p\left(\int_{\Omega_{2r}}|\eta D(\Delta_\lambda u)|^p\dif x\right)^{\frac{1}{p}}$$
However, by  H\"{o}lder's inequality we have
$$\left(\int_{\Omega_{2r}}|\eta D(\Delta_\lambda u)|^p\dif x\right)^{\frac{1}{p}}\le I_\lambda(u)^{\frac{1}{2}}\left(\int_{\Omega_{2r}}(1+|D(u)(x+\lambda e_k)|+|D(u)|)^{p}\dif x\right)^{\frac{2-p}{2}}.$$
Hence
\begin{equation*}
\begin{aligned}
&J_2\le \|S(D(u))\|_{L^{p'}(\Omega_{3r})}\left(\frac{C\lambda^2}{r^2}\|\nabla u\|_{L^{p}(\Omega_{3r})}+CI_\lambda(u)^{\frac{1}{2}}(1+\|\nabla u\|^{\frac{2-p}{2}}_{L^{p}(\Omega_{3r})})\right)-C\gamma_1 I_\lambda(u)\\
&\le (1+\|\nabla u\|^{p-1}_{L^{p}(\Omega_{3r})})\left(\frac{C\lambda^2}{r^2}\|\nabla u\|_{L^{p}(\Omega_{3r})}+CI_\lambda(u)^{\frac{1}{2}}(1+\|\nabla u\|^{\frac{2-p}{2}}_{L^{p}(\Omega_{3r})})\right)-C\gamma_1 I_\lambda(u)\\
&\le \lambda^2 C(|\Omega_{3r}|,r,\epsilon)\left(1+\|\nabla u\|^{p}_{L^{p}(\Omega_{3r})}\right)-(C_1\gamma_1-\epsilon)I_\lambda(u).\\
&|J_5|=|(f,\Delta_{-\lambda}(\eta^2\Delta_\lambda u))|\le \|f\|_{L^{p'}(\Omega_{3r})}|\lambda|\|\nabla(\eta^2\Delta_\lambda u\|_{L^{p}(\Omega_{2r})}\\
&\le C(r,p)\lambda^2\|f\|_{L^{p'}(\Omega_{3r})}\|\nabla u\|_{L^{p}(\Omega_{3r})}+|\lambda|\|\eta^2\Delta_\lambda \nabla u\|_{L^{p}(\Omega_{2r})}\\
&\le |\lambda|I_\lambda(u)^{\frac{1}{2}}(1+\|\nabla u\|^{\frac{2-p}{2}}_{L^{p}(\Omega_{3r})})\|f\|_{L^{p'}(\Omega_{3r})}+C(\frac{1}{r^2}+\frac{1}{r})\lambda^2\|f\|_{L^{p'}(\Omega_{3r})}\|\nabla u\|_{L^{p}(\Omega_{3r})}\\
&\le C(p,|\Omega_{3r}|)\lambda^2\left(\|f\|^{p'}_{L^{p'}(\Omega_{3r})}+\|\nabla u\|^p_{L^{p}(\Omega_{3r})}+1\right)+\epsilon I_\lambda(u).
\end{aligned}
\end{equation*}
From the  estimate for pressure, divergence-free and the method above, we have
\begin{equation*}
\begin{aligned}
&|J_4|\le 2\left|\int_{\Omega_{3r}}\pi\Delta_\lambda(\eta\eta_i\Delta_\lambda u_i)\dif x\right|\\
&\le C|\lambda|\|\pi\|_{L^{p'}(\Omega_{3r})}\|\frac{\p}{\p x_k}(\eta\eta_{x_i}\Delta_\lambda u_i)\|_{L^{p}(\Omega_{3r})}\\
&\le C\frac{1}{r^2}|\lambda|^2\|\pi\|_{L^{p'}(\Omega_{3r})}\|\nabla u\|_{L^{p}(\Omega_{3r})}+\frac{C|\lambda|}{r}|\lambda|^2\|\pi\|_{L^{p'}(\Omega_{3r})}\|\eta\nabla\Delta_\lambda u\|_{L^{p}(\Omega_{2r})}\\
&\le C\frac{1}{r^2}|\lambda|^2\left(\|\pi\|^{p'}_{L^{p'}(\Omega_{3r})}+\|\nabla u\|^p_{L^{p}(\Omega_{3r})}+1\right)+\epsilon I_{\lambda}(u).
\end{aligned}
\end{equation*}
Now we estimate the term $J_3$. In fact,
\begin{equation*}
\begin{aligned}
&J_3=\int_{\Omega_{3r}}\eta^2\Delta_\lambda V^*\cdot\nabla u \Delta_\lambda u\dif x-2\int_{\Omega_{3r}}\eta V^*\cdot\nabla\eta |\Delta_\lambda u|^2\dif x\\
&:=J_{31}+J_{32}
\end{aligned}
\end{equation*}
Since $\frac{9}{5}<p\le2$, then $2<q=\frac{6p}{5p-6}<\frac{3p}{3-p}=p^*$, by  H\"{o}lder inequality, we have
\begin{equation*}
\begin{aligned}
&J_{31}=\|\eta\Delta_\lambda V^*\|_{L^6(\Omega_{3r})}\|\nabla u\|_{L^{p}(\Omega_{3r})} \|\eta\Delta_\lambda u\|_{L^q(\Omega_{3r})}\\
&\le C|\lambda|\|\nabla u\|_{L^{p}(\Omega_{3r})}\|\nabla V^*\|_{L^6(\Omega_{3r})} \|\eta\Delta_\lambda u\|_{L^q(\Omega_{3r})}\\
&\le C|\lambda|\|\nabla u\|_{L^{p}(\Omega_{3r})}\|V^*\|_{W^{2,2}(\Omega_{3r})} \|\eta\Delta_\lambda u\|_{L^q(\Omega_{3r})}\\
&\le C|\lambda|\|V\|_{L^{\infty}((0,T)\times\Omega)}\|\nabla u\|_{L^{p}(\Omega_{3r})}\|\eta\Delta_\lambda u\|_{L^q(\Omega_{3r})}\\
&J_{32}\le\frac{C}{r}\|V^*\|_{L^6(\Omega_{3r})}\|\eta\Delta_\lambda u\|_{L^q(\Omega_{3r})}\|\Delta_\lambda u\|_{L^{p}(\Omega_{3r})}\\
&\le C|\lambda|\|V\|_{L^{\infty}((0,T)\times\Omega)}\|\nabla u\|_{L^{p}(\Omega_{3r})}\|\eta\Delta_\lambda u\|_{L^q(\Omega_{3r})}.
\end{aligned}
\end{equation*}
It is implies
$$|J_3|\le C(\frac{1}{r},|\Omega_{3r}|)|\lambda|\|V\|_{L^{\infty}((0,T)\times\Omega)}\|\nabla u\|_{L^{p}(\Omega_{3r})}\|\eta\Delta_\lambda u\|_{L^q(\Omega_{3r})}$$
Since $2<q<p^*$, from the following interpolation inequalities
\begin{eqnarray*}
\|u\|_{L^q}\le \|u\|_{L^{p^*}}^\theta\|u\|^{1-\theta}_{L^2};\\
\|u\|_{L^q}\le \|u\|_{L^{p^*}}^{\theta_1}\|u\|^{1-\theta_1}_{L^p},
\end{eqnarray*}
where $\theta=\frac{6-2p}{5p-6},\theta_1=\frac{12-5p}{2p}$. We obtain
\begin{equation*}
\begin{aligned}
&|J_3|&\le C(\frac{1}{r},|\Omega_{3r}|)|\lambda|\|V\|_{L^{\infty}((0,T)\times\Omega)}\|\nabla u\|_{L^{p}(\Omega_{3r})}\|\eta\Delta_\lambda u\|^{(1-\alpha)(1-\theta_1)}_{L^p(\Omega_{3r})}
\\
&&\|\eta\Delta_\lambda u\|^{(1-\theta)\alpha}_{L^2(\Omega_{2r})}\|\eta\Delta_\lambda u\|^{\alpha\theta+(1-\alpha\theta_1)}_{L^{p^*}(\Omega_{3r})}\\
&\le& C\|V\|_{L^{\infty}((0,T)\times\Omega)}|\lambda|^{1+Q_3}\|\nabla u\|^{1+Q_3}_{L^{p}(\Omega_{3r})}\|\eta\Delta_\lambda u\|^{Q_1}_{L^2(\Omega_{2r})}\|\eta\Delta_\lambda u\|^{Q_2}_{L^{p^*}(\Omega_{2r})}
\end{aligned}
\end{equation*}
where
\begin{eqnarray*}
&&Q_1=(1-\theta)\alpha=\frac{7p-12}{5p-6}\alpha,\,\,0<\alpha<1,\\
&&Q_2=\alpha\theta+(1-\alpha\theta_1)=\frac{6-2p}{5p-6}\alpha+\frac{12-5p}{2p}(1-\alpha)\\
&&Q_3=(1-\alpha)(1-\theta_1)=\frac{7p-12}{2p}(1-\alpha).
\end{eqnarray*}
Since $$\|\eta\Delta_\lambda u\|_{L^{p^*}(\Omega_{2r})}\le C\|\nabla(\eta\Delta_\lambda u)\|_{L^{p}(\Omega_{2r})}\le C\|D(\eta\Delta_\lambda u)\|_{L^{p}(\Omega_{2r})}\le C I_\lambda(u)^{\frac{1}{2}}(1+|\nabla u|^{\frac{2-p}{2}}_{L^{p}(\Omega_{2r})}).$$ It infers that
\begin{equation*}
\begin{aligned}
&|J_3|\le C\|V\|_{L^{\infty}((0,T)\times\Omega)}|\lambda|^{1+Q_3}\|\nabla u\|^{1+Q_3}_{L^{p}(\Omega_{3r})}\|\eta\Delta_\lambda u\|^{Q_1}_{L^2(\Omega_{2r})}I_\lambda(u)^{\frac{1}{2}Q_2}\left(1+|\nabla u|^{\frac{2-p}{2}Q_2}_{L^{p}(\Omega_{2r})}\right)\\
&\le C\|V\|_{L^{\infty}((0,T)\times\Omega)}|\lambda|^{1+Q_3}\left(1+\|\nabla u\|^{1+Q_3+\frac{2-p}{2}Q_2}_{L^{p}(\Omega_{3r})}\right)\|\eta\Delta_\lambda u\|^{Q_1}_{L^2(\Omega_{2r})}I_\lambda(u)^{\frac{1}{2}Q_2}
\end{aligned}
\end{equation*}
By Young's Inequality, we get 
$$|J_3|\le C\|V\|_{L^{\infty}((0,T)\times\Omega)}|\lambda|^{(1+Q_3)\delta'}\left(1+\|\nabla u\|^{(1+Q_3+\frac{2-p}{2}Q_2)\delta'}_{L^{p}(\Omega_{3r})}\right)\|\eta\Delta_\lambda u\|^{Q_1\delta'}_{L^2(\Omega_{2r})}+\epsilon I_\lambda(u).$$
Then choose $\delta$ and $\delta'$ satisfy the following identities
\begin{eqnarray*}
\frac{Q_2}{2}\delta=1,\,\, (1+Q_3+\frac{2-p}{2}Q_2)\delta'=p,\,\, \mbox{and}\,\,\frac{1}{\delta}+\frac{1}{\delta'}=1.
\end{eqnarray*}
From these identities, we can obtain $\alpha=\frac{(5p-6)(2-p)}{7p-12}$.  Since $p>\frac{9}{5}$, thus $0<\alpha<1$, $(1+Q_1+Q_3)\delta'=2$ and $\frac{Q_1\delta'}{2}<1$. Therefore,
$$|J_3|\le C\|V\|_{L^{\infty}((0,T)\times\Omega)}|\lambda|^{(1+Q_3)\delta'}\left(1+\|\nabla u\|^{p}_{L^{p}(\Omega_{3r})}\right)\|\eta\Delta_\lambda u\|^{Q_1\delta'}_{L^2(\Omega_{2r})}+\epsilon I_\lambda(u).$$
Combined these relations of $J_1,\cdots,J_5$, we can conclude that
\begin{equation}\label{3.8}\tag{3.8}
\begin{aligned}
&\frac{\dif}{\dif t}\|\eta\Delta_\lambda u\|^2_{L^2(\Omega_{2r})}+(C_1\gamma_1-4\epsilon)I_\lambda(u)\\
&\le C\lambda^2\left(1+\|\nabla u\|^{p}_{L^{p}(\Omega_{3r})}+\|\pi\|^{p'}_{L^{p'}(\Omega_{3r})}+\|f\|^{p'}_{L^{p'}(\Omega_{3r})} \right)\\
&+C\|V\|_{L^{\infty}((0,T)\times\Omega)}|\lambda|^{(1+Q_3)\delta'}\left(1+\|\nabla u\|^{p}_{L^{p}(\Omega_{3r})}\right)\|\eta\Delta_\lambda
u\|^{Q_1\delta'}_{L^2(\Omega_{2r})}
\end{aligned}
\end{equation}
Since $(1+Q_1+Q_3)\delta'=2$ and set
\begin{equation}\label{3.9}\tag{3.9}
\begin{aligned}
&r(t)=C\|V\|_{L^{\infty}((0,T)\times\Omega)}\left(1+\|\nabla u\|^{p}_{L^{p}(\Omega_{3r})}\right),\\
&h(t)=C \left(1+\|\nabla u\|^{p}_{L^{p}(\Omega_{3r})}+\|\pi\|^{p'}_{L^{p'}(\Omega_{3r})}+\|f\|^{p'}_{L^{p'}(\Omega_{3r})} \right).
\end{aligned}
\end{equation}
where $C$ does not depend on $\lambda, u.$
Hence (3.8) can be rewritten
\begin{equation*}
\begin{aligned}
&\frac{\dif}{\dif t}\|\frac{\eta\Delta_\lambda u}{\lambda}\|^2_{L^2(\Omega_{2r})}+(C_1\gamma_1-4\epsilon)\frac{I_\lambda(u)}{\lambda^2}\\
&\le  h(t)+ r(t)\|\frac{\eta\Delta_\lambda u}{\lambda}\|^{Q_1\delta'}_{L^2(\Omega_{2r})}.
\end{aligned}
\end{equation*}
 Since $p>\frac{9}{5}$,it is easy to know that $\frac{Q_1\delta'}{2}<1$. From Young's inequality, we have
 \begin{equation*}
\begin{aligned}
&\frac{\dif}{\dif t}\|\frac{\eta\Delta_\lambda u}{\lambda}\|^2_{L^2(\Omega_{2r})}+(C_1\gamma_1-4\epsilon)\frac{I_\lambda(u)}{\lambda^2}\\
&\le  (h(t)+ r(t))+r(t)\|\frac{\eta\Delta_\lambda u}{\lambda}\|^2_{L^2(\Omega_{2r})}.
\end{aligned}
\end{equation*}
 By  Gronwall's inequality, it follows
\begin{equation*}
\begin{aligned}
&\|\frac{\eta\Delta_\lambda u}{\lambda}\|^2_{L^2(\Omega_{2r})}(t)+(C_1\gamma_1-4\epsilon)\int_0^t\frac{I_\lambda(u)}{\lambda^2}\\
&\le\left[ \|\frac{\eta\Delta_\lambda u}{\lambda}\|^2_{L^2(\Omega_{2r})}(0)+\int_0^t h(s)\exp{\left(-\int_0^s r(\tau)\dif\tau\right)}\dif s\right]\exp{\int_0^t r(s)\dif s}.
\end{aligned}
\end{equation*}
Assume that $u_0\in V_2$, then it implies that for all $t\in [0,T]$
\begin{equation}\label{3.10}\tag{3.10}
\begin{aligned}
&\|\frac{\eta\Delta_\lambda u}{\lambda}\|^2_{L^2(\Omega_{2r})}(t)+(C_1\gamma_1-4\epsilon)\int_0^t\frac{I_\lambda(u)}{\lambda^2}\\
&\le\left[ \|\nabla u^*_0\|^2_{L^2(\mathbb{R}^3_+)}+\int_0^1(h(s)+r(s))\exp{\left(-\int_0^s r(\tau)\dif\tau\right)}\dif s\right]\exp{\int_0^t r(s)\dif s}.
\end{aligned}
\end{equation}

Choose $\epsilon=\frac{C_1\gamma_1}{8}$ and from (3.9),(3.10), we conclude for any $t\in [0,T]$
\begin{equation}\label{3.11}\tag{3.11}
\begin{aligned}
\|\nabla u\|^2_{L^2(\Omega_{r})}(t)+\int_0^tI(u)\le C.
\end{aligned}
\end{equation}
where $I(u)=\int_{\Omega_r}(1+2|D(u)|)^{p-2}|D(\nabla u)|^2\dif x$ and $C$ depends on $\|V\|_{L^{\infty}((0,T)\times\Omega)},p,a,u_0,f,T,r.$ Since $$\int_0^t\|\nabla^2 u\|^p_{L^p(\Omega_r)}\dif x\le \int_0^tI(u)+\int_0^t\|\nabla u\|^p_{L^p(\Omega_r)}$$ for all $t\in [0,T]$, thus $\|u\|_{L^p(0,T;W^{2,p}(\Omega_r))}\le C.$
Multiply  the first equation of (3.7) by $\eta^2 u_t$ and integrate on the $\Omega_{3r}$, one obtains
\begin{equation}\label{3.12}\tag{3.12}
\begin{aligned}
&\|\eta u_t\|^2_{L^2(\Omega_{3r})}(t)+\frac{\dif}{\dif t} \|\eta^2\Phi(D(u))\|_{L^1(\Omega_{3r})}\\
&\le 2\int_{\Omega_{3r}}|\nabla\eta S(D(u))\eta u_t|\dif x+\int_{\Omega_{3r}}V^*\cdot\nabla u\eta^2u_t\dif x\\
&+2\int_{\Omega_{3r}}\pi\nabla\eta\cdot\eta u_t\dif x+\int_{\Omega_{3r}}f\cdot\eta^2 u_t\dif x.
\end{aligned}
\end{equation}
Since $\frac{1}{2}+\frac{1}{p'}<1$, by H\"{o}lder inequality, one can
\begin{equation*}
\begin{aligned}
&\int_{\Omega_{3r}}|\nabla\eta S(D(u))\eta u_t|\dif x\le\|\nabla\eta\|_{L^q(\Omega_{3r})}\|S(D(u))\|_{L^{p'}(\Omega_{3r})}\|\eta u_t\|_{L^2(\Omega_{3r})}\\
&\int_{\Omega_{3r}}V^*\cdot\nabla u\eta^2u_t\dif x\le C \|V\|_{L^{\infty}((0,T)\times\Omega)}\|\eta\nabla u\|_{L^2(\Omega_{2r})}\|\eta u_t\|_{L^2(\Omega_{3r})}\\
&\int_{\Omega_{3r}}\pi\nabla\eta\cdot\eta u_t\dif x+\int_{\Omega_{3r}}f\cdot\eta^2 u_t\dif x\le C\|\nabla\eta+\eta\|_{L^q(\Omega_{3r})}\|\pi+f\|_{L^{p'}(\Omega_{3r})}\|\eta u_t\|_{L^2(\Omega_{3r})}.
\end{aligned}
\end{equation*}
Thanks to Young's inequality, we can obtain that
\begin{equation*}
\|\eta u_t\|^2_{L^2(\Omega_{3r}\times (0,t))}+ \|\eta^2\Phi(D(u))\|_{L^\infty(0,t;L^1(\Omega_{3r}))}\le K
\end{equation*}
where $K$ depends only on $p,a,u_0,f,r, \|V\|_{L^{\infty}((0,T)\times\Omega)}, \|\nabla u\|_{L^p(\mathbf{R}^3\times (0,T))}.$

In fact, from this proof, we can see that the bound depends on the measure of $\Omega$ and $\Omega'$. Hence,  if the radius of the  ball $B$ is fixed, then $\|\nabla^2u\|_{L^p(0,T;L^p{B})}\le C$,  consequently,  $u\in C^\gamma(B)$. We use the following argument ( see\cite{ga}) to know that  $u(x,t)\to 0$ for almost$t\in(0,T)$, as $|x|\to \infty.$ Let the radius $B$ be one, suppose that there exist $\epsilon>0$ and a sequence $\{x_n\}\subset \mathbb{R}^3$ with $\lim_{n\to\infty}|x_n|\to\infty$, such that for almost $t\in(0,T)$, $u(x_n,t)\ge\epsilon.$  By the continuity of $u(x,t)$, then we get that if $|x-x_n|\le \delta=\min\{1,(\frac{\epsilon}{2C})^{\frac{1}{\gamma}}\}$ then $u(x,t)\ge \frac{\epsilon}{2}$. Without loss of generality, we assume that $|x_i-x_j|>2$ provided that $i\neq j$ thus
\begin{equation*}
\int_0^T(\int_{\mathbb{R}^3} |u|^{p^*}\dif x)^\frac{p}{p^*}\dif t\ge\sum_{j}\int_0^T(\int_{B(x_j)} |u|^{p^*}\dif x)^\frac{p}{p^*}\dif t =+\infty.
\end{equation*}
and this contradict with  the fact $u\in L^p(0,T;V_p(\mathbb{R}^3)).$

From the fact $u(x,t)\to 0$ a.e. $t\in (0,T)$,  as $|x|\to\infty$ and estimate (3.13), we can conclude that the weak solution is unique. Actually, assume that $u,v$ are that weak solutions of problem (3.8), then set $e=u-v$ and multiply the difference of the equations of $u$ and $v$ by $e$. After integrate over  $\mathbb{R}^3$, we have
$$(\p_t e,e)+(S(D(u))-S(D(v)),D(u)-D(v))=0.$$
It reduces to
\begin{equation}\label{3.15}\tag{3.13}
\frac{\dif}{\dif t}\|e\|^2_{L^2(\mathbb{R}^3)}+(S(D(u))-S(D(v)),D(u)-D(v))=0\end{equation}
Use  Gronwall's inequality in (3.14), we have $e=0$, i.e. $u=v.$  Define
 \begin{equation*}
 \begin{aligned}
 &u^*=(u_1(x_1,x_2,-x_3),u_2(x_1,x_2,-x_3),-u_3(x_1,x_2,-x_3)\\
 &\pi^*=\pi(x_1,x_2,-x_3),\,\, \forall x\in\mathbb{R}^3.
 \end{aligned}
 \end{equation*}
 Then by the method in \cite{B3}, the couple $(u^*,\pi^*)$ is also a solution to problem (3.4) a.e. in $\mathbb{R}^3_+\times (0,T).$ Hence by the uniqueness, we know that $u^*=u$.

 From the regularity of $u$, and Theorem 7.1 of \cite{B3} and Sobolev imbedding theorems, we know the solution $u(t)$ to problem (1.1) is simply the restriction of $u^*(t)$ to the half-space $\mathbb{R}^3_+$, and $\|\nabla u^*\|_{L^p(\mathbb{R}^3\times (0,T))}\le C\|\nabla u\|_{L^p(\mathbb{R}^3_+\times (0,T))},$  $\|u^*\|_{L^\infty(0,T;L^2(\mathbb{R}^3))}\le C\|u\|_{L^\infty(0,T;H)},$ and  $u(x)$ is a continuous function on $B_r$ for any $r>0$. So that from $u_3(x)=-u_3(x_1,x_2,-x_3)$ we know that $u_3|_{x_3=0}=0.$ Analogously $u_i(x)=u_i(x_1,x_2,-x_3),\,\forall x\in\mathbb{R}^3(i=1,2)$, satisfy the conditions (1.5) in the sense of trace.

 The theorem is completely proved.
\end{proof}

By the minor modification of the proof in theorem above, we can obtain the regularity results in the case $p\ge 2$.
\begin{Lemma}
Let $p\ge 2,f\in L^{p'}(Q_T)\cap L^2(Q_T)$, $u_0\in V_2\cap H$ satisfies the boundary conditions (1.6) , and $S$ given by a p-potential from Definition 1. If  $u\in L^p(0,T; V_p)\cap L^\infty (0,T;H)$ is the weak solution for problem (1.1), then this solution is also a unique strong solution to problem (1.1) such that
\begin{alignat*}{12}
&\|u\|_{L^{\infty}(0,T; W^{1,2}(\Omega'))\cap L^2(0,T; W^{2,2}(\Omega'))}\le C(\delta(\Omega',\Omega),|\Omega'|,u_0,f,T,\|V\|_{L^\infty((0,T)\times\Omega)}),\\
&\int^T_0\int_{\Omega'}(1+|D(u)|)^{p-2}|\nabla D(u)|^2\dif x\dif t\le C(\delta(\Omega',\Omega),|\Omega'|,u_0,f,T,\|V\|_{L^\infty((0,T)\times\Omega)}).\\
&\|\frac{\p u}{\p t}\|^2_{L^2((0,T)\times\Omega')}+\|\Phi(D(u))\|_{L^\infty(0,T;L^1(\Omega'))}\le C(\delta(\Omega',\Omega),|\Omega'|,u_0,f,T,\|V\|_{L^\infty((0,T)\times\Omega)}).\\
&\|\pi\|_{L^2((0,T)\times\Omega')}\le C(\delta(\Omega',\Omega),|\Omega'|,u_0,f,T,\|V\|_{L^\infty((0,T)\times\Omega)}).
\end{alignat*}
\end{Lemma}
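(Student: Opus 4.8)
The plan is to re-run the proof of Theorem 3.4 essentially verbatim, changing only the places where the sign of $p-2$ matters and adding the one ingredient absent from the $p<2$ case, namely the $L^2$ pressure bound. First I would reflect $u_0$, $f$ and $V$ across $\{x_3=0\}$ exactly as in (3.1), using the $W^{2,2}$-extension of \cite{ad} for $V$, pass to the Cauchy problem (3.7) on $\R{3}$, and quote \cite{po} for a global weak solution together with the local pressure estimate of \cite{wo}; since $f\in L^2(Q_T)$ is now assumed, I would run that pressure estimate with exponent $2$ rather than $p'$.

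The decisive simplification is in the coercivity term. Testing with $\Delta_{-\lambda}(\eta^2\Delta_\lambda u)$ and isolating $J_{22}$ as before, Theorem 2.2 produces the weight $(1+\abs{D(u)(x+\lambda e_k)}+\abs{D(u)})^{p-2}$, which for $p\ge 2$ is bounded below by $1$. Hence $I_\lambda(u)\ge\norm{\eta\,\Delta_\lambda D(u)}_{L^2(\Omega_{2r})}^2$, so the difference quotients of $D(u)$ are controlled directly in $L^2$; there is no need for the H\"older interpolation between $L^p$ and the weighted $L^2$ norm used in the $p<2$ computation, and Corollary 2.4 together with Lemma 3.3 upgrades this to an $L^2(0,T;W^{2,2}(\Omega'))$ bound. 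I would re-estimate $J_2,J_4,J_5$ with the H\"older exponents reversed, since $p-2\ge 0$ now sits on the growth side of $S$, which is routine. For the convective term $J_3$ the key observation is that the threshold $p>\frac95$, forced in Theorem 3.4 to keep $\alpha\in(0,1)$ and $Q_1\delta'/2<1$, is automatically satisfied once $p\ge 2$: using $V^*\in L^\infty$, $\nabla V^*\in L^6$ from $W^{2,2}\hookrightarrow W^{1,6}$ in three dimensions, and the embedding $\norm{\eta\Delta_\lambda u}_{L^6}\le C\norm{\nabla(\eta\Delta_\lambda u)}_{L^2}$, the interpolation closes with room to spare and the offending term is absorbed into $\ee I_\lambda(u)$. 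Gronwall's inequality then yields the analogues of (3.11) and (3.6), and multiplying (3.7) by $\eta^2\p_t u$ and integrating over $\Omega_{3r}$ gives $\norm{\p_t u}_{L^2((0,T)\times\Omega')}$ and $\norm{\Phi(D(u))}_{L^\infty(0,T;L^1(\Omega'))}$ exactly as before.

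The genuinely new ingredient — and the step I expect to require the most care — is the pressure bound $\norm{\pi}_{L^2((0,T)\times\Omega')}\le C$, which has no counterpart in Theorem 3.4. From the first equation of (3.7) one has $\nabla\pi=f^*-\p_t u+\di S(D(u))-(V^*\cdot\nabla)u$; taking the divergence and using $\di u=0$ turns $\pi$ into the solution of a local Poisson problem, so the local pressure estimate of \cite{wo}, now run with exponent $2$ (this is where $f\in L^2(Q_T)$ enters), bounds $\norm{\pi}_{L^2}$ in terms of $\norm{S(D(u))}_{L^2}$, $\norm{V^*\otimes u}_{L^2}$ and $\norm{f^*}_{L^2}$ plus lower-order terms. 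The point to verify is that $S(D(u))$ lies in $L^2$ locally: by (2.5), $\abs{S(D(u))}\le c_2\gamma_2(1+\abs{D(u)})^{p-1}$, and the $W^{2,2}$-regularity just established places $D(u)$ in $L^\infty(0,T;L^2(\Omega'))\cap L^2(0,T;L^6(\Omega'))$ through $W^{1,2}\hookrightarrow L^6$, whence a parabolic interpolation controls the relevant power of $D(u)$; this is the one estimate sensitive to the size of $p$ and should be recorded carefully. Uniqueness is then identical to Theorem 3.4: subtracting the equations for two solutions and testing with $e=u-v$ gives (3.13), and the strict monotonicity (2.4) with Gronwall forces $e\equiv 0$, an argument insensitive to the value of $p$; the reflection symmetry of \cite{B3} finally identifies the restriction of the whole-space solution with the half-space solution and recovers the boundary conditions (1.5) in the trace sense.
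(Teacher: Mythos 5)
Your overall strategy coincides with the paper's: the paper disposes of this lemma in one sentence, as ``a minor modification of the proof'' of Theorem~3.4, and the modifications you describe --- the same reflection to the Cauchy problem (3.7), the observation that for $p\ge 2$ the weight $(1+|D(u)(x+\lambda e_k)|+|D(u)|)^{p-2}\ge 1$ so that $I_\lambda(u)$ bounds $\|\eta\Delta_\lambda D(u)\|_{L^2(\Omega_{2r})}^2$ directly and yields the $L^2(0,T;W^{2,2}(\Omega'))$ estimate without the H\"older interpolation of the shear--thinning case, the remark that the $J_3$ restrictions are vacuous once $p\ge 2$, the $\eta^2\partial_t u$ test for the third estimate, and the monotonicity/Gronwall uniqueness plus the reflection argument of \cite{B3} --- are exactly the intended ones. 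In this respect you supply considerably more detail than the paper itself.

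The one step that does not close as you have written it is the genuinely new assertion, $\|\pi\|_{L^2((0,T)\times\Omega')}\le C$. Running the local pressure estimate of \cite{wo} with exponent $2$ requires $S(D(u))\in L^2_{\mathrm{loc}}$, and by (2.5) this amounts to $D(u)\in L^{2(p-1)}$ locally in space--time. The interpolation you propose, from $D(u)\in L^\infty(0,T;L^2(\Omega'))\cap L^2(0,T;L^6(\Omega'))$, gives only $D(u)\in L^{10/3}((0,T)\times\Omega')$, hence $S(D(u))\in L^2$ only when $2(p-1)\le \tfrac{10}{3}$, i.e.\ $p\le \tfrac{8}{3}$. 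Even the sharper route through the weighted estimate, which gives $(1+|D(u)|)^{p/2}\in L^2(0,T;W^{1,2}(\Omega'))$ and hence $D(u)\in L^p(0,T;L^{3p}(\Omega'))$, closes the interpolation against $L^\infty(0,T;L^2)$ only for $p\le\tfrac{10}{3}$; using instead $\Phi(D(u))\in L^\infty(0,T;L^1(\Omega'))$, which by the coercivity of $F$ gives $D(u)\in L^\infty(0,T;L^p(\Omega'))$, pushes the admissible range to $p\le 6$ but still not to all $p\ge 2$. So for large $p$ the claimed $L^2$ pressure bound needs an additional ingredient (higher integrability of $D(u)$, or a different pressure decomposition); you flag this step as the delicate one but do not close it, and the paper, offering no proof of the lemma, does not close it either.
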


\begin{Remark}
Since $\di V=0$, thus if $u$ is a weak solution of problem (1.1),  then we can obtain the following estimates
\begin{equation}\label{3.14}\tag{3.14}
\|u\|^2_{L^\infty (0,T;H)}+\|u\|^p_{L^p(0,T; V_p)}\leq C (\|u_0\|^2_H+\|f\|^{p'}_{L^{p'}(\mathbb{R}^3_+)\times (0,T)}).
\end{equation}
It is easy to see that $C$ does not depend  on $a,r,|\Omega'|, u_0,f,\|V\|_{L^\infty((0,T)\times\Omega)}$.
\end{Remark}

From these regularity estimates presented in Theorem 3.4 and lemma 3.5, we can obtain the existence of unique weak solution to problem (1.1) stated by the following theorem.

\begin{Theorem}
Let $p>\frac{9}{5}, f\in L^{p'}(Q_T)\cap L^2(Q_T),V\in L^\infty((0,T),W^{2,2}(\RR^3_+)), u_0\in V\cap H$, and $S$ given by a p-potential from Definition 2.1. Then there exists a unique weak solution for problem (1.1),   $u \in L^p(0,T; V_p)\cap L^\infty (0,T;H)$ and satisfies the inequality (3.14).
\end{Theorem}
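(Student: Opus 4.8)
The plan is to construct the solution by a Galerkin scheme based on the special divergence-free basis of Proposition 2.8, carried out first on the bounded domains $\Omega_R$ and then passed to the limit $R\to\infty$, with the regularity estimates of Theorem 3.4 and Lemma 3.5 supplying the compactness needed to handle the nonlinear stress, and uniqueness inherited from the monotonicity argument already used there.

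First, for fixed $R$ I would look for $u^N=\sum_{i=1}^N c_i^N(t)\,a_i$, where $\{a_i\}$ is the eigenbasis of Proposition 2.8 --- orthogonal and complete in $H(\Omega_R)$ and $V_2(\Omega_R)$ and already satisfying $\nabla\cdot a_i=0$ and the boundary conditions (1.5). Projecting (1.1) onto $\mathrm{span}\{a_1,\dots,a_N\}$ gives a system of ordinary differential equations for the $c_i^N$; continuity of $S$ and the growth bound (2.5) make the right-hand side continuous, so Carath\'eodory's theorem yields a local solution. Testing with $u^N$, the coercivity (2.4) controls $\|D(u^N)\|_{L^p}^p$ from below, the convective term drops out because $V$ is solenoidal and tangential to the boundary so that $\int_{\Omega_R}(V\cdot\nabla)u^N\cdot u^N\,\dif x=0$, and Young's inequality on the forcing closes the estimate. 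Together with Corollary 2.5 (Korn on the half-space cone) this produces the bound (3.14) uniformly in $N$ and $R$, which both globalizes the ODE solution on $[0,T]$ and bounds $u^N$ in $L^\infty(0,T;H)\cap L^p(0,T;V_p)$.

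Next I would let $N\to\infty$. The energy bound and (2.5) give $S(D(u^N))$ bounded in $L^{p'}$, so along a subsequence $u^N\rightharpoonup u$ weakly-$*$ in $L^\infty(0,T;H)$, weakly in $L^p(0,T;V_p)$, and $S(D(u^N))\rightharpoonup\chi$ in $L^{p'}$. To identify $\chi=S(D(u))$ I would run the interior difference-quotient argument of Theorem 3.4 (for $\frac 95<p<2$) and Lemma 3.5 (for $p\ge 2$) at the Galerkin level, getting uniform control of $\int_0^T\!\int_{\Omega'}(1+|D(u^N)|)^{p-2}|\nabla D(u^N)|^2\,\dif x\,\dif t$ and of $\p_t u^N$ in $L^2_{\mathrm{loc}}$. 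By the Aubin--Lions lemma, $D(u^N)\to D(u)$ strongly in $L^p_{\mathrm{loc}}$ and almost everywhere; continuity of $S$ plus a Vitali (uniform-integrability) argument then give $S(D(u^N))\to S(D(u))$ in $L^{p'}_{\mathrm{loc}}$, so $\chi=S(D(u))$, and passing to the limit in the weak formulation (2.6) shows $u$ solves (1.1) on $\Omega_R$.

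Since all constants above are $R$-independent, a diagonal subsequence as $R\to\infty$ delivers a weak solution $u\in L^p(0,T;V_p(\mathbb{R}^3_+))\cap L^\infty(0,T;H)$ of (1.1) obeying (3.14). Uniqueness is the computation of Theorem 3.4: for two weak solutions the difference $e=u-v$ satisfies $\frac{\dif}{\dif t}\|e\|^2_{L^2(\mathbb{R}^3_+)}+(S(D(u))-S(D(v)),D(u)-D(v))=0$, and the strict monotonicity of $S$ in Theorem 2.2 with $e(0)=0$ forces $e\equiv 0$. I expect the principal difficulty to be exactly the identification of $\chi$ with $S(D(u))$: the Minty monotone-operator route is awkward here because $(V\cdot\nabla)u$ does not vanish against arbitrary test functions and the domain is unbounded, so I would instead secure strong convergence of $D(u^N)$ via the regularity estimates --- the delicate technical points being that the difference-quotient bounds (including the auxiliary pressure estimate $\|\pi\|_{L^{p'}}\le C(\dots)$ from Theorem 3.4) must hold uniformly in $N$ and $R$, and that the constant in (3.14) must genuinely be $R$-free so the half-space limit can be taken.
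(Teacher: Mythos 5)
Your overall architecture (Galerkin with the eigenbasis of Proposition 2.8 on $\Omega_R$, energy estimate, limit $R\to\infty$, uniqueness by monotonicity of $S$) matches the paper, but the step where you identify the weak limit $\chi$ of $S(D(u^N))$ in the Galerkin limit has a genuine gap. You propose to ``run the interior difference-quotient argument of Theorem 3.4 and Lemma 3.5 at the Galerkin level.'' As stated this cannot be done: that argument tests the equation with $\Delta_{-\lambda}(\eta^2\Delta_\lambda u)$, which is neither divergence-free nor in $\mathrm{span}\{a_1,\dots,a_N\}$, so it is not an admissible test function for the finite-dimensional Galerkin system; moreover the argument relies on the local pressure bound $\|\pi\|_{L^{p'}}\le C(\dots)$ obtained from Wolf's theorem for an exact solution, and the Galerkin iterates carry no associated pressure. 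The estimates (3.4)--(3.6) are established in the paper only for a genuine weak solution of the Cauchy problem (3.7), not for approximations.

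The paper resolves the identification differently and in two stages. For fixed $R$ it uses the classical Minty trick: the energy identity (3.25) is obtained by testing the limit equation with $u$ itself, the convective contribution $(V\cdot\nabla u,u)$ vanishes because $V$ is solenoidal and $V\otimes u\in L^{p'}$ whenever $p>\frac{9}{5}$, and then the monotonicity inequality (3.26) with $\Psi=u-\epsilon\varphi$ yields $\tilde S^R=S(D(u))$. So your stated reason for avoiding Minty (that the convective term is troublesome and the domain unbounded) does not apply at this stage, where the domain is bounded. Only in the second limit $R\to\infty$, where each $u^{R_N}$ is already an exact weak solution to which Theorem 3.4 and Lemma 3.5 legitimately apply, does the paper invoke the $W^{2,p}_{\mathrm{loc}}$ bounds, Aubin--Lions, a.e. convergence of $\nabla u_{N_k}$ and Vitali's theorem to identify the stress. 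If you insist on your route you would have to derive second-order estimates for $u^N$ by testing with admissible combinations of eigenfunctions (for instance $\sum_k\lambda_k c_k a_k$), which is a different and substantially more delicate argument than the one you cite.
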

\begin{proof}
  From the proof in Theorem 3.4, it is easy to see that the weak solution is unique. We will use standard Galerkin method to prove its existence.

Let $$\mathbb{R}^3_+=\bigcup_{R=1}^{\infty}\Omega_R=\bigcup_{R=1}^{\infty}\{x\in\mathbb{R}^3_+:|x|\le R\}.$$
Fix $R>0$, we consider the auxiliary problem (2.7)  for the initial $u^R_0=P(\chi_{\Omega_R}(x)u_0(x))$ and external force term $f^R=\chi_{\Omega_R}(x) f$. 

As Lemma 3.5 in \cite{mar}, we construct a sequence of solenoidal vector  functions $V_R$ defined in $\Omega_R$ such that $V_R\to V$ in $L^p (0,T;V_p(\mathbb{R}_+^3))$  as $R\to \infty$,  $\|V_R\|_{L^\infty(0,T;W^{2,2}(\Omega_R))}\le C $, here $C$ depends only on the Geometry of $\Omega_R$, but does not depend on the measure of $\Omega_R$.  We also know that $u^R_0\to u_0$ in $V_2\cap H$ and$f^R\to f$ in $L^{p'}(Q)$, as $R\to\infty$.

Choose the sequence $\{a_k^R\}$ is the eigenvector of the operator $A$ as in Proposition 2.8, then $\{a_k^R\}$ is a basis $W^{2,2}(\Omega_R)\cap V_2(\Omega_R)$. We look for the weak solution to the following problem 
 \begin{equation*}
\left\{\begin{aligned} &\partial_t u-\di S(D(u))+(u\cdot\nabla)u+\nabla \pi=f, &&\mbox{in}~~\Omega_R\times (0, T) \\[3mm]
&\nabla\cdot u = 0, &&\mbox{in}~~\Omega_R\times (0, T), \\[2mm]
&u_3=0,~~~\frac{\p u_1}{\p x_3}=\frac{\p u_2}{\p x_3}=0  &&\mbox{on}~~\Gamma_R^1\times(0, T),\\[2mm]\\
&u=0    &&\mbox{on}~~\Gamma_R^2\times(0, T),\\
&u|_{t=0}=u_0(x),&& \mbox{in}~~\Omega_R.
\end{aligned}\right.
\end{equation*}

  We find the approximation solutions with the form $$u_m^R(x,t)=\sum_{k=1}^m c_{k,m}^R(t) a_k^R(x).$$ For simplicity, in the clear meaning setting, we omit the superscript $R$. Therefore, $c_{k,m}(t)$ solve the following system of ordinary differential equations
\begin{equation}\label{3.15}\tag{3.15}
\begin{aligned}
\frac{\dif }{\dif t}(u_m,a_k)+(S(D(u_m)),D (a_k))-(V\otimes u_m,\nabla a_k)=(f,a_k).
\end{aligned}
\end{equation}
Due to the continuity of $S, V$, the local-in-time existence  follows from Caratheodory theory. The global-in-time existence will be established by the following a-priori estimates.

Multiply the equations (3.15) by $c_{k,m}$,  then sum over $k$ and integrate on $(0,t)$. We easily obtain
\begin{equation}\label{3.16}\tag{3.16}
\begin{aligned}
\|u_m\|^2_{L^2(\Omega_R)}+\int_0^t(S(D(u_m)),D (u_m))=(f,u_m)+\|u_0\|_{L^2(\Omega_R)}.
\end{aligned}
\end{equation}
Hence
\begin{equation}\label{3.17}\tag{3.17}
\begin{aligned}
\sup_{0\le t\le T}\|u_m\|^2_{L^2(\Omega_R)}(t)+\|u_m\|^p_{L^p(0,T;V_p(\Omega_R))}\le  \|u_0\|^2_H+\|f\|^{p'}_{L^{p'}(\mathbb{R}^3)\times(0,T)}:=M.
\end{aligned}
\end{equation}

From (3.15) and (3.17),  we infer that $\|u_m'\|_{L^{p'}(0,T; (V_p(\Omega_R))^*)}\le C(R,M).$

From these estimates, and Aubin-Lions Lemma, we have
\begin{alignat*}{12}
&u'_m\rightharpoonup u',\,\, &&~\mbox{weakly\,\, in}~ L^{p'}(0,T;V_p(\Omega_R)^*)\tag{3.18};\\
&u_m \stackrel{*}{\rightharpoonup} u,\,\, &&~\mbox{weakly}^* \mbox{\,\, in}~ L^{\infty}(0,T;L^2(\Omega_R))\tag{3.19};\\
&u_m \rightharpoonup u,\,\,&&~\mbox{weakly\,\, in}~ L^p(0,T;V_p(\Omega_R)),\tag{3.20};\\
&u_m\longrightarrow u,\,\,&&~\mbox{strongly\,\, in}~ L^q(0,T;L^q(\Omega_R))\,~ q\in [1,\frac{5p}{3})\tag{3.21};\\
&S(D(u^R_m))\rightharpoonup \tilde{S}^R,\,\,&&~\mbox{weakly\,\, in}~ L^{p'}(\Omega_R\times(0,T)) \tag{3.22}.
\end{alignat*}
It is easy to see that
\begin{equation}\label{3.23}\tag{3.23}
\begin{aligned}
\inner{u_t}{a_k}+(\tilde{S}^R,D (a_k))-(V\cdot\nabla a_k,u)=(f,a_k).
\end{aligned}
\end{equation}
Multiplying both sides of (3.23) by $c_{k,m}$ and summing over $k$ we find
\begin{equation}\label{3.24}\tag{3.24}
\begin{aligned}
\inner{u_t}{u_m}+(\tilde{S}^R,D (u_m))-(V\cdot\nabla u_m,u)=(f,u_m).
\end{aligned}
\end{equation}
Let us pass to the limit for $m\to\infty$ in to (3.24).  By the convergence properties (3.18),  (3.20) and (3.22), we know that as $m\to\infty$
\begin{eqnarray*}
&&\int_0^T \inner{u_t}{u_m}\to \int_0^T\inner{u_t}{u}=\|u\|^2_{L^2(\Omega_R)}-\|u_0\|^2_{L^2(\Omega_R)},~\,\\
&&\int_0^T(f,u_m)\to\int_0^T(f,u),~ \int_0^T(\tilde{S}^R,D (u_m))\to\int_0^T(\tilde{S}^R,D (u)).
\end{eqnarray*}
Since
\begin{equation*}
(V\cdot\nabla u_m,u)-(V\cdot\nabla u,u)=\int_{\Omega_R}(u_{\eta(\varepsilon)}\otimes u)\cdot\nabla(u_m-u)\dif x
\end{equation*}
From (3.20) and (3.21), we know that $V\otimes u\in L^{p'}(\Omega_R\times(0,T))$ whenever $p>\frac{9}{5}.$ Hence $\int_0^T(V\cdot\nabla u_m,u)\to 0$ as $m\to \infty$, since $(V\cdot\nabla u,u)=0.$
Subtracting (3.24) by (3.15) and passing to limit as $m\to\infty$, we get
\begin{equation}\label{3.25}\tag{3.25}
\lim_{m\to\infty}\int_0^T(S(D(u_m^R)),D(u_m))\dif t=\int_0^T(\tilde{S^R},D(u))\dif t
\end{equation}
By the monotonicity property (2.4), we can write the following inequality
\begin{equation}\label{3.26}\tag{3.26}
\int_0^T(S(D(u_m^R))-S(D(\Psi)),(D(u_m)-D(\Psi)))\dif t\ge 0,
\end{equation}
For any $\Psi\in L^p(0,T;V_p(\Omega))$. Thus, pass to the limit as $m$ goes to infinity into this relation and using (3.20), (3.22)and (3.25), we have
 \begin{equation}\label{2.27}\tag{2.27}
\int_0^T(\tilde{S}^R-S(D(\Psi)),D(u)-D(\Psi))\dif t\ge 0.
\end{equation}
For all $\Psi\in L^p(0,T;V_p(\Omega))$, Take $\Psi=u-\epsilon \varphi$, $\epsilon>0$ and $\varphi\in L^p(0,T;V_p(\Omega))$, we have that
\begin{equation*}
\int_0^T(\tilde{S}^R-S(D(u-\epsilon \varphi)),D(\varphi))\dif t\ge 0,
\end{equation*}
Letting $\epsilon\to 0$ and using the continuity of $S$, we arrive at
\begin{equation*}
\int_0^T(\tilde{S}^R-S(D(u)),D(\varphi))\dif t\ge 0, \forall \varphi\in L^p(0,T;V_p(\Omega))
\end{equation*}
Choose $-\varphi$ in place of $\varphi$, we get
\begin{equation*}
\int_0^T(\tilde{S}^R-S(D(u)),D(\varphi))\dif t\le 0, \forall \varphi\in L^p(0,T;V_p(\Omega))
\end{equation*}
This implies that $\tilde{S}^R=S(D(u))$ a.e. $\Omega_R\times (0,T).$  Thus the existence of weak solution $u^R$ to problem above is proved.

Next we must consider the limits as $R$ tend to $\infty$. Now we choose a sequence of real number $\{R_N:N\in\mathbf{N}\}$ increasing to infinity. We set $u_N=u^{R_N}$ and extend $u_N$ to zero outside $\Omega_{R_N}$ to obtain a function still denote $u_N\in L^\infty(0,T;H)\cap L^p(0,T; V_p)$ and satisfies the following apriori estimate
\begin{equation}\label{3.28}\tag{3.28}
\begin{aligned}
\sup_{0\le t\le T}\|u_N\|_{H}(t)+\|u_N\|_{L^p(0,T;V_p)}\le  \|u_0\|^2_H+\|f\|^{p'}_{L^{p'}(\mathbb{R}^3)\times(0,T)}:=M.
\end{aligned}
\end{equation}
Clearly, $M$ is independent of $N$. From (3.28) and interpolation inequality we have   $\|V_N\otimes u_N\|_{L^{p'}(\mathbb{R}^3_+\times(0,T))}\le C(M)$.  let $Y=V_p$, Therefore,  from the equations (3.23) we obtain
 \begin{equation*}
 \|u'_N\|_{L^{p'}(0,T;Y^*)}\le \|S(D(u_N))\|_{L^{p'}(Q)}+\|V_N\otimes u_N\|_{L^2(Q)}\le C(M).
 \end{equation*}
By the Aubin-Lions Lemma, we know that there exists a subsequence $u_{N_k}\to u$ in $L^{p}(\Omega_R\times(0,T))$, thus $u_{N_k}\to u$ a.e. in $\mathbb{R}^3_+\times[0,T]$.

We choose $\phi\in C_0^{\infty}(\overline{\mathbb{R}^3_+}\times[0,T))$ with $\di\phi=0,\,\, \phi_3|_{x_3=0}=0$, and $\text{supp} \phi\subset \overline{\mathbb{R}^3_+}\times [0,T).$  There exists a number $K=K(\phi)>0$ such that $G=\text{supp}\phi\subsetneqq \Omega_{N_K}\times [0,T)$ for $k>K$. From the formula (2.6),  we can obtain the following identity
 \begin{equation}
 \begin{aligned}
 &-\int_Q (u_{N_k}\cdot\p_t\phi)\dif x\dif t+\int_QS(x,t,D(u_{N_k})):D(\phi)\dif x\dif t\\
 &-\int_Q (V_{N_k}\otimes u_{N_k}):D(\phi)\dif x\dif t=\int_{\mathbb{R}^3_+}u_0\cdot\phi(0)\dif x.
\end{aligned}\tag{3.29}
 \end{equation}

 From the estimate (3.28), we know that $u_{N_k}\rightharpoonup u$ in $L^2(Q)$, $S(x,t,D(u_{N_k})\rightharpoonup \tilde{S}$ in the space $L^{p'}(Q)$ and $|V_{N_k}\otimes u_{N_k}|_{L^2(Q)}\le C\|u_0\|_H\|V\|_{L^\infty(G)}$. By  Vitali's theorem, we have  as $k\to\infty$
 \begin{equation*}
 \begin{aligned}
 &\int_Q (u_{N_k}\cdot\p_t\phi)\dif x\dif t\to \int_Q (u\cdot\p_t\phi)\dif x\dif t,\,\\
 &\int_Q(V_{N_k}\otimes u_{N_k}):D(\phi)\dif x\dif t\to\int_Q(V_{N_k}\otimes u):D(\phi)\dif x\dif t.
 \end{aligned}
 \end{equation*}
 From these convergence and the formula (3.29), we know that $$\int_Q(S(x,t,D(u_{N_k})):D(\phi)\dif x\dif t\to\int_Q(\tilde {S}:D(\phi)\dif x\dif t.$$

One must check that $S(x,t,D(u))=\tilde {S}$  a.e. in $\mathbb{R}^3_+\times[0,T].$ It suffices to prove that  $$\int_Q(S(x,t,D(u_{N_k})):D(\phi)\dif x\dif t\to\int_Q(S(D(u)):D(\phi)\dif x\dif t.$$ Indeed, observing that for any $R_N>R_{N_K}$ the solutions $u_N$ satisfies the hypotheses of Theorem 3.4 and lemma 3.5 with $\Omega=\Omega_{N_K}$ and fixed a domain  $\Omega'$ such that $G\subset\Omega'\times(0,T)\subset\subset\Omega\times(0,T)$, we get that $u_{N}\in L^p(0,T;W^{2,p}(\Omega'))$ and
\begin{equation*}\begin{aligned}
&\|u'_{N_k}\|_{L^2((0,T)\times\Omega')}+\|\nabla u_{N_k}\|_{L^\infty(0,T; L^2(\Omega'))}\\
&+\|\nabla^2 u_{N_k}\|_{L^p(\Omega'\times(0,T))}\le C(\delta(\Omega',\Omega),|\Omega'|,u_0,f,T,\|V\|_{L^\infty(G)}), ~\mbox{in\ \ the\ \ case\ \ } \frac{9}{5}<p<2;\\
&\|u'_{N_k}\|_{L^2((0,T)\times\Omega')}+\|\nabla u_{N_k}\|_{L^\infty(0,T; L^2(\Omega'))}\\
&+\|\nabla^2 u_{N_k}\|_{L^2(\Omega'\times(0,T))}\le C(\delta(\Omega',\Omega),|\Omega'|,u_0,f,T,\|V\|_{L^\infty(G)}), ~\mbox{in\ \ the\ \ case\, \ } p\ge 2.
\end{aligned}
\end{equation*}
From the boundedness above and the Aubin-Lions lemma we obtain that
$$\nabla u_{N_k}\to \nabla u\,\,~\mbox{in}~ L^p(\Omega'\times(0,T)),~ \,\,\nabla u_{N_k}\to \nabla u~\mbox{a.e. in}~ G.$$
Therefore, $S(x,t,D(u_{N_k}))\to S(x,t,D(u))~\mbox{a.e. in}~ G$, then by  Vitali's theorem, we obtain as $k\to\infty$
\begin{equation*}
\begin{aligned}
&\int_Q\left((S(x,t,D(u_{N_k}))-(S(D(u))\right):D(\phi)\dif x\dif t =\\
&\int_G\left((S(x,t,D(u_{N_k}))-(S(D(u))\right):D(\phi)\dif x\dif t\to0.
\end{aligned}
\end{equation*}
 Whence, this proves the theorem.
\end{proof}

\end{document}